\documentclass{amsart}
\usepackage{amsmath,amsthm,amssymb,amsfonts,amscd, array, graphicx,float,comment}
\usepackage[all]{xy}
\allowdisplaybreaks

\newtheorem{thm}{Theorem}[section]
 \newtheorem{cor}[thm]{Corollary}
 \newtheorem{lem}[thm]{Lemma}
 \newtheorem{prop}[thm]{Proposition}
 \newtheorem{defn}[thm]{Definition}
\newtheorem{rem}[thm]{Remark}
 \newtheorem{ex}[thm]{Example}
\title[HBT for non-associative power series and Laurent series ring]{THE HILBERT BASIS THEOREM FOR NON-ASSOCIATIVE SKEW POWER SERIES RINGS AND NON-ASSOCIATIVE SKEW LAURENT SERIES RINGS OVER A NON-ASSOCIATIVE COEFFICIENT RING}
\author{Darwin P. Mangubat}
\address[Darwin P. Mangubat, corresponding author]{%
Department of Mathematics and Statistics\\
College of Science and Mathematics\\
Center of Mathematical and Theoretical Physical Sciences-PRISM\\
Mindanao State University-Iligan Institute of Technology\\
9200 Iligan City, Philippines
}
\email{darwin.mangubat@g.msuiit.edu.ph}

\author{Jocelyn P. Vilela}
\address[Jocelyn P. Vilela]{%
Department of Mathematics and Statistics\\
College of Science and Mathematics\\
Center of Mathematical and Theoretical Physical Sciences-PRISM\\
Mindanao State University-Iligan Institute of Technology\\
9200 Iligan City, Philippines
}
\email{jocelyn.vilela@g.msuiit.edu.ph}

\author{Johan Richter}
\address[Johan Richter]{%
Department of Mathematics and Natural Sciences\\
Blekinge Institute of Technology\\
371 79 Karlskrona, Sweden
}
\email{johan.richter@bth.se}

\begin{document}

\begin{abstract}
In this paper, we give conditions on a unital non-associative ring $R$ and additive map $\sigma: R\longrightarrow R$ so that the non-associative skew power series ring, denoted by $R[[X;\sigma]]$ and the non-associative skew Laurent series ring denoted by $R((X;\sigma))$ are left (resp. right) Noetherian, respectively. Consequently, we establish versions of Hilbert Basis Theorem of $R[[X;\sigma]]$ and $R((X;\sigma))$, respectively, for left (resp. right) Noetherianity .  We also give a counterexample showing that the Hilbert Basis Theorem does not hold in general in the non-associative case. In establishing the Hilbert Basis Theorems for $R[[X;\sigma]]$ and $R((X;\sigma))$, we introduce and utilize the right (resp. left) quasi-associative property of $R$, which is equivalent to $rR$ (resp. $Rc$) being one-sided ideals for all $r\in R$ (resp. $c\in R$).
 \\[+2mm]
 \subjclassname{ 16W30}\\
 {\bf Keywords}: non-associativity, Noetherianity, Hilbert Basis Theorem, skew power series ring, skew Laurent series ring. 
 \end{abstract}

\maketitle

\section{Introduction }

The Hilbert Basis Theorem is a foundational result in the theory of polynomial rings originating from the work of David Hilbert in 1890 \cite{Hilbert}. The original formulation of the theorem dealt with sequences of homogeneous polynomials in the commutative setting, which were called at that time as \textit{forms}. When Noetherianity was first introduced by Emmy Noether in 1921 in her paper \textit{Idealtheorie in Ringbereichen} \cite{Noether}, the Hilbert Basis Theorem was reformulated using Noetherianity. Eventually, the Hilbert Basis Theorem was expanded to the noncommutative case using algebraic structures called \textit{Ore extensions}. 
%Ore extensions were first introduced by {\O}ystein Ore in 1933. 
An Ore extension over any ring $R$, denoted by $R[X;\sigma,\delta]$, is a ring consisting of polynomials over $R$, where $\sigma: R\longrightarrow R$ is an endomorphism and $\delta: R\longrightarrow R$ is a $\sigma$-derivation. $R[X;\sigma,\delta]$ adheres to the multiplication rule $Xr=\sigma(r)X+\delta(r)$ for all $r\in R$ \cite{Back2024}.

In recent studies, Ore extensions have been generalized to the case of non-associative  multiplication and extended to formal power series and formal Laurent series. The study of B\"ack and Richter in 2024 \cite{Back2024} combined these ideas to define new structures called \textit{non-associative skew power series ring $R[[X;\sigma]]$} and\textit{ non-associative skew Laurent series ring $R((X;\sigma))$}. It also established the Hilbert Basis Theorem for both structures, provided that $R$ is a unital associative ring.

In this paper, we investigate conditions on a unital non-associative ring $R$ and map $\sigma:R\longrightarrow R$ under which $R[[X;\sigma]]$ and $R((X;\sigma))$ are right and left Noetherian, respectively, and establish a corresponding Hilbert Basis Theorem for right and left Noetherianity. In doing this, we extend the results of B\"ack and Richter \cite{Back2024}. We also give a counterexample showing that the Hilbert Basis Theorem does not hold in general for non-associative skew power series rings. (In fact, our counterexample is for a non-associative formal power series ring.) 

In further detail, this article is outlined as follows: Section \ref{s2} provides preliminaries from the non-associative ring theory, as well as the definitions for $R[[X;\sigma]]$ and $R((X;\sigma))$. Section \ref{s3} establishes the main results of the paper, particularly, the right and left quasi-associative properties of $R$ and the Hilbert Basis Theorem for $R[[X;\sigma]]$ (Theorems  \ref{314} and \ref{3110}) and $R((X;\sigma))$ (Corollaries \ref{3119} and \ref{3120left} ) with some concrete examples. Additionally, we show a counterexample (Example \ref{counter}) showing that the Hilbert Basis Theorem fails to hold in some $R[[X;\sigma]]$.

\section{Preliminaries}\label{s2}

Here, we introduce the notion of a non-associative ring, and its related terms and properties, adapting the discussion from the paper of \"Oinert et al. \cite{Nystedt2018} in 2018, unless otherwise specified.
\begin{defn}\label{021}
	A \textit{non-associative} ring $R$ \label{naring} means that $R$ is an additive abelian group, with a defined multiplication satisfying left and right distributivity, where $R$ is not necessarily associative with respect to multiplication.
\end{defn}
\vspace{0.1cm}
Clearly, the rings satisfy Definition \ref{021}. Inside the context of non-associative rings, we refer to rings as \textit{associative rings}. 

\begin{defn}
	If a non-associative ring $R$ has a multiplicative identity $1$, then $R$ is \textit{unital}.
\end{defn}

\begin{defn}\label{ss}
	For a non-associative ring $R$, the \textit{associator }is the function $(\cdot,\cdot,\cdot): R \times R \times R \to R$ \label{associator} defined for all $r,s,t \in R$ as $(r,s,t) = (rs)t - r(st)$.
\end{defn}

\begin{defn}\label{ss1}
	Using the associator, we define the following sets,
	\[N_l(R) := \{ r \in R : (r,s,t) = 0 \text{ for all } s,t \in R \}\;\;\;\text{(\textit{left nucleus of R})}\label{lnuc},\] 
	\[N_m(R) := \{ s \in R : (r,s,t) = 0 \text{ for all } r,t \in R \}\;\;\;\text{(\textit{middle nucleus of R})} \label{mnuc},\]
	\[N_r(R) := \{ t \in R : (r,s,t) = 0 \text{ for all } r,s \in R \}\;\;\;\text{(\textit{right nucleus of R})} \label{rnuc}.\]
	The set $N_{l}(R)\cap N_{m}(R)\cap N_{r}(R)$ is called the \textit{nucleus} of $R$, denoted by $N(R)$\label{nuc}.
\end{defn}

\begin{prop}
	Let $R$ be a non-associative ring. For all elements $u,r,s,t\in R$, the following \textit{associator identity} holds,
	\[u(r, s, t) + (u, r, s)t + (u, rs, t) = (ur, s, t) + (u, r, st).\]
\end{prop}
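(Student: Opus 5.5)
The plan is a direct expansion: write each of the five associators using Definition \ref{ss}, expand using only left and right distributivity, and observe that everything cancels. No associativity may be used, so every product must keep its full parenthesization throughout.

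Expanding the left-hand side gives
\begin{align*}
u(r,s,t) &= u((rs)t) - u(r(st)),\\
(u,r,s)t &= ((ur)s)t - (u(rs))t,\\
(u,rs,t) &= (u(rs))t - u((rs)t).
\end{align*}
Distributivity is exactly what is needed to write $u\bigl((rs)t - r(st)\bigr)$ and $\bigl((ur)s - u(rs)\bigr)t$ as differences. In the sum, the term $u((rs)t)$ cancels and so does $(u(rs))t$. The left-hand side therefore equals $((ur)s)t - u(r(st))$.

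Expanding the right-hand side gives
\begin{align*}
(ur,s,t) &= ((ur)s)t - (ur)(st),\\
(u,r,st) &= (ur)(st) - u(r(st)).
\end{align*}
Here the term $(ur)(st)$ cancels, so the right-hand side also equals $((ur)s)t - u(r(st))$, and the two sides agree.

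There is no real obstacle. The only point needing care is bookkeeping: keep the parenthesizations distinct, such as $u((rs)t)$ versus $(u(rs))t$, so that the cancellations are genuine identities of terms rather than uses of associativity. The identity holds in any non-associative ring (Definition \ref{021}); even a unit is not needed.
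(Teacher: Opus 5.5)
Your direct expansion is correct: using only distributivity, both sides reduce to $((ur)s)t - u(r(st))$, and each cancellation is between identically parenthesized terms, so no associativity is used. The paper states this identity without proof, quoting it from the cited literature, and your computation is the standard verification.
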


\begin{rem}
	By the associator identity, $N_{l}(R)$, $N_{m}(R)$, and $N_{r}(R)$ are associative subrings of $R$.
\end{rem}

\begin{ex}\cite{Baez2002}
	\textnormal{Consider the set $\mathbb{O}$ of octonions over real numbers $\mathbb{R}$: \label{real}
	\[
	\mathbb{O}
	=
	\left\{
	a_0 1_{\mathbb{R}} + \sum_{i=1}^{7} a_i e_i
	\;\middle|\;
	a_0, a_1, \dots, a_7 \in \mathbb{R}
	\right\}.
	\]
	The set $\mathbb{O}$ \label{oct}is an 8-dimensional algebra on $\mathbb{R}$ with basis $\{1_{\mathbb{R}}, e_{1}, e_{2}, e_{3}, e_{4}, e_{5}, e_{6}, e_{7}\}$ where $1_{\mathbb{R}}$ is the multiplicative identity and $e_{i}^{2}=-1$ for each $1\leq i\leq 7$}.

    \begin{table}[H]
		\renewcommand{\thetable}{2.1}
		\centering
		\vspace*{-0.1cm}
		\includegraphics[height=6cm,width=8cm]{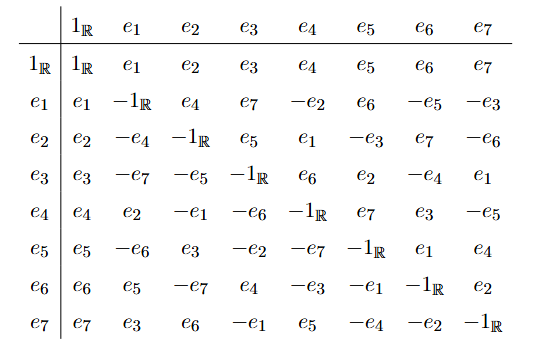}
		\caption{Multiplication table for $\mathbb{O}$} % Optional: add a caption here
		\label{fig:otable} % Optional: add a label here
\end{table}

\textnormal{The octonions are not associative since there are basis elements, $e_{i}, e_{j}, e_{k}$, where the products $(e_{i}e_{j})e_{k}$ and $e_{i}(e_{j}e_{k})$ are additive inverses of each other. Consider the basis elements $e_{1}, e_{2},e_{3}\in\mathbb{O}$ where $(e_{1}e_{2})e_{3}=e_{4}e_{3}=-e_{6 }$ and $e_{1}(e_{2}e_{3})=e_{1}e_{5}=e_{6}$ so, $(e_{1},e_{2},e_{3})\neq0$.}
\end{ex}

\vspace{0.1cm}
If $A$ and $B$ are non-associative rings, then the direct product $A\times B$ with usual componentwise addition is an abelian group. The usual componentwise multiplication is also distributive over addition. $A\times B$ is not necessarily associative since for any $(a_{1},b_{1})$, $(a_{2},b_{2})$, $(a_{3},b_{3})\in A\times B$, $((a_{1},b_{1})(a_{2}.b_{2}))(a_{3},b_{3})\neq (a_{1},b_{1})((a_{2},b_{2})(a_{3},b_{3}))$ in general since for every component, $(a_{1}a_{2})a_{3}\neq a_{1}(a_{2}a_{3})$ and $(b_{1}b_{2})b_{3}\neq b_{1}(b_{2}b_{3})$ in general. This leads to the following remark
\begin{rem}
    If $A$ and $B$ are non-associative rings, then $A\times B$ is also non-associative.
\end{rem}

Now, we will introduce the ideal of a non-associative ring.
\vspace{0.1cm}
\begin{defn}\label{nideal}
	\cite{waliyanti2021nonassociative}
	Let $R$ be a non-associative ring. A  nonempty subset $I$ in $R$ is a \textit{left} (resp. right) \textit{ideal} of $R$ if $(I,+)$ is a subgroup of $R$ and for all $r\in R$, $rI:=\{ri\;|\;i\in I\}\subseteq I$ (resp. $Ir:=\{ir\;|\;i\in I\}\subseteq I$). If $I$ is both left and right ideal, then $I$ is called an \textit{ideal}.
\end{defn}
\vspace{0.1cm}
Note that Definition \ref{nideal} is similar to the ideal of an associative ring, but the concrete description is distinct. On the other hand, the concept of finitely generated ideals of a non-associative ring is entirely different. We adapt the paper of B\"ack and Richter \cite{Back2024} in 2024 for discussing this one.\\

In a non-associative ring $R$, the right (resp. left) ideal of $R$, generated by elements $a_{1},\ldots,a_{n}$ for some $n\in\mathbb{N}$, is not equal to the set of elements of the form $\left\{\sum\limits_{i=1}^{n}a_{i}r_{i}\;\;|\;\;r_{i}\in R\right\}$ \;\;\;$\Bigg($resp. $\left\{\sum\limits_{i=1}^{n}r_{i}a_{i}\;\;|\;\;r_{i}\in R\right\}\Bigg)$ which are the elements of the right (resp. left) ideal of $R$ when $R$ is associative. This is because in general, $(a_{i}r_{i})r_{j}\neq a_{i}(r_{i}r_{j})$ \;\;(resp. $(r_{i}r_{j})a_{i}\neq r_{i}(r_{j}a_{i})$). Instead, the elements have the form
\begin{align*}
	&\sum_{i=1}^{n} \sum_{j=1}^{n_i} \sum_{l=1}^{k_{ij}} (\cdots((a_i r_{ijl1})r_{ijl2})\cdots)r_{ijlj}\\
	&\qquad\qquad\Bigg(\text{resp.}\;\;\sum_{i=1}^{n} \sum_{j=1}^{n_i} \sum_{l=1}^{k_{ij}} r_{ijlj}(\cdots(r_{ijl2}(r_{ijl1} a_i))\cdots)\Bigg)\qquad\text{(2.1)}
\end{align*}
for some $n_{i}, k_{ij}\in\mathbb{N}$ where 
$\sum\limits_{i=1}^{n}$ sums over the generators $a_{i}$, $\sum\limits_{j=1}^{n_i}$ sums over the length of the product for a specific $a_{i}$, that is, the number of times $a_{i}$ is being multiplied on the right (resp. left) by an element of $R$, and $\sum\limits_{l=1}^{k_{ij}}$ sums over different terms for a specific product length $j$ and generator $a_{i}$.\\

Now, let $X\subseteq R$. We apply the concept presented in the paper of B\"ack and Richter \cite{Back2024} to the right (resp. left) ideal of $R$ generated by $X$. We denote this by $\langle X\rangle_{r}$ (resp. $\langle X\rangle_{l}$). $\langle X\rangle_{r}$ (resp. $\langle X\rangle_{l}$) consists of finite sums for any $n\in\mathbb{N}$ of the form given by (2.1).\\

We focus our attention on how Noetherianity is defined for non-associative rings. 
\begin{defn}\label{238}
	\cite{Back2024}
	Let $R$ be a non-associative ring. $R$ is \textit{left} (resp. \textit{right}) \textit{Noetherian}, if $R$ satisfies the ACC on left (resp. right) ideals, that is, for every ascending chain of left \textup{(}resp. right\textup{)} ideals $A_{1}\subset A_{2}\subset\cdots$, there is some $n\in\mathbb{N}$ such that $A_{i}=A_{n}$ for all $i\geq n$. If $R$ is both left and right Noetherian, then $R$ is \textit{Noetherian}.
\end{defn}
\vspace{0.1cm}
This produces the following remark.
\vspace{0.1cm}
\begin{rem}\label{239}
	\cite{Back2024}
	Let $R$ be a non-associative ring. $R$ satisfying ACC on left (resp. right) ideals, is equivalent to all left (resp. right) ideals being finitely generated.
\end{rem}
\begin{ex}\label{211}
	\textnormal{Consider the non-associative ring $\mathbb{O}$ of octonions. Since $\mathbb{O}$ is a division algebra \cite{Baez2002}, the only left and right ideals are $\{0\}$ and $\mathbb{O}$ itself. Hence, it easily satisfies the ACC for left and right ideals so $\mathbb{O}$ is Noetherian.}
\end{ex}
\vspace{0.1cm}
Now, we present non-associative skew power series rings $R[[X;\sigma]]$ and non-associative skew Laurent series rings $R((X;\sigma))$ whose definitions are adapted from \cite{Back2024}.
\vspace{0.1cm}
\begin{defn}\label{213}
	Let $R$ be a unital non-associative ring with an additive map $\sigma$ on $R$ such that $\sigma(1)=1$. A \textit{non-associative skew power series ring} $R[[X;\sigma]]$ \label{skewpowerseries} is a set of formal power series $
	\sum\limits_{i=0}^{\infty} r_i X^i$,
	where $r_i \in R$ with the usual pointwise addition and the multiplication is defined by $
	(r X^m)(s X^n):=(r \sigma^m(s)) X^{m+n}$
	for any $r, s \in R$ and $m, n \in \mathbb{N}$. 
\end{defn}
The multiplication rule presented extends to any formal power series over $R$ as follows: Let $\sum\limits_{i=0}r_{i}X^{i},\sum\limits_{j=0}s_{j}X^{j}\in R[[X;\sigma]]$. Then
\begin{align*}
    \left( \sum_{i=0}^{\infty} r_i X^i \right) \left( \sum_{j=0}^{\infty} s_j X^j \right)&=\sum_{i=0}^{\infty} \sum_{j=0}^{\infty} (r_i X^i)(s_j X^j)\\
    &=\sum_{i=0}^{\infty} \sum_{j=0}^{\infty}r_{i}\sigma^{i}(s_{j})X^{i+j}.
\end{align*}
We group each term of this sum by total powers of $X$. Let $k=i+j$. So, $j=k-i$ and since $j\geq0$, $i\leq k$. Then
\[
\sum_{i=0}^{\infty} \sum_{j=0}^{\infty}r_{i}\sigma^{i}(s_{j})X^{i+j}=\sum_{k=0}^{\infty}\left(\sum_{i=0}^{k} r_i \sigma^i (s_{k-i})\right)X^{k}.
\]

\begin{ex}
	\textnormal{Consider $\mathbb{C}[[X;\sigma]]$ consisting of formal power series over $\mathbb{C}$ where $\mathbb{C}$ is a non-associative ring and\\ $\sigma:\mathbb{C}\longrightarrow\mathbb{C}$ is defined by $\sigma(a+bi) = a-qbi$ for any $a,b,q \in \mathbb{R}$ where $q\neq\pm 1$. Together with the usual pointwise addition and multiplication defined by}
	\begin{align*}
		(a+bi X^m)(c+di X^n)&:=(a+bi) \;\sigma^m(c+di)) X^{m+n}\\
		&=\begin{cases} 
			(a+bi)(c+q^{m}di) & \text{if } m\;\;\text{is even} \\
			(a+bi)(c-q^{m}di)  & \text{if } m\;\;\text{is odd},
		\end{cases}
	\end{align*}
	\textnormal{this is a non-associative skew power series ring.}
\end{ex}
\vspace{0.1cm}
Particularly, the example above is not associative. Consider $X$ and $i$. Then
\[(Xi)i = (qiX)i = qi(Xi) = qi(qiX) = q^2i^2X = -q^2X\]
\[X(ii) = X(-1) = \sigma(-1)X = -1X = -X\] which can be equal only if $q=\pm 1$.
\vspace{0.1cm}
\begin{defn}\label{215}
	Let $R$ be a unital non-associative ring with an additive bijection $\sigma$ such that $\sigma(1)=1$. The \textit{non-associative skew Laurent series ring} $R((X;\sigma))$ \label{skewlaurentseries} is the set consisting of formal Laurent series $
	\sum\limits_{i=n}^{\infty} r_i X^i$, where $r_i \in R$, and $n \in \mathbb{Z}$, 
	with pointwise addition and the multiplication is determined by the same twisting rule $
	(r X^m)(s X^n):=(r \sigma^m(s)) X^{m+n}$.
\end{defn}
Similarly, the multiplication rule here also extends to any formal Laurent series over $R$ as follows: Let $\sum\limits_{i=n}r_{i}X^{i},\sum\limits_{j=m}s_{j}X^{j}\in R((X;\sigma))$ for some $n,m\in\mathbb{Z}$. Then
\begin{align*}
    \left( \sum_{i=n}^{\infty} r_i X^i \right) \left( \sum_{j=m}^{\infty} s_j X^j \right)&=\sum_{i=n}^{\infty} \sum_{j=m}^{\infty} (r_i X^i)(s_j X^j)\\
    &=\sum_{i=n}^{\infty} \sum_{j=m}^{\infty}r_{i}\sigma^{i}(s_{j})X^{i+j}.
\end{align*}
Grouping each term of the sum in terms of total powers of $X$, let $k=i+j$ so $j=k-i$. Observe that $i\geq n$ and $j\geq m$ so $k-i\geq m$ which further implies $i\leq k-m$.
So, 
\[
\sum_{i=n}^{\infty} \sum_{j=m}^{\infty}r_{i}\sigma^{i}(s_{j})X^{i+j}=\sum_{k=n+m}^{\infty}\left(\sum_{i=n}^{k-m} r_i \sigma^i (s_{j})\right)X^{k}.
\]

\begin{ex}
\textnormal{Consider $\mathbb{C}((X;\sigma))$ consisting of formal Laurent series over $\mathbb{C}$ where $\mathbb{C}$ is a non-associative ring and $\sigma:\mathbb{C}\longrightarrow\mathbb{C}$ is defined by $\sigma(a+bi) = a-qbi$ for any $a,b,q \in \mathbb{R}$ where $q\neq\pm 1$. Together with the usual pointwise addition and the multiplication defined by}
\begin{align*}
	(a+bi X^m)(c+di X^n)&:=(a+bi) \;\sigma^m(c+di)) X^{m+n}\\
	&=\begin{cases} 
		(a+bi)(c+q^{m}di) & \text{if } m\;\;\text{is even} \\
		(a+bi)(c-q^{m}di)  & \text{if } m\;\;\text{is odd},
	\end{cases}
\end{align*}
\textnormal{this is a non-associative skew Laurent series ring.}
\end{ex}

From Definition \ref{213} (resp. \ref{215}), $R[[X;\sigma]]$ (resp. $R((X;\sigma))$) has formal power series (resp. formal Laurent series) as elements. "Formal" means that these infinite sums are viewed as a sequence of coefficients $(r_{0},r_{1},\ldots,)$ (resp. $(r_{n},r_{n+1},\ldots,)$ for some $n\in\mathbb{Z}$) with $X$ as a placeholder rather than functions that converge for some value of $X$, so there is no assumption of analytic convergence here.

Moreover, $R[[X;\sigma]]$ and $R((X;\sigma))$ are non-associative, respectively. So, for any $r,s\in R$, $X(rs)$ is not necessarily equal to $(Xr)s$. This implies that $X$ and its powers are not necessarily in $N_{l}(R[[X;\sigma]])$ (resp. $N_{l}(R((X;\sigma))$). This leads to the following observation.

\begin{rem}\label{316}
    In $R[[X;\sigma]]$ (resp. $R((X;\sigma))$), \[X^{k}\in N_{m}(R[[X;\sigma]])\cap N_{r}(R[[X;\sigma]])\]\;\;\text{(resp. $X^{k}\in N_{m}(R((X;\sigma)))\cap N_{r}(R((X;\sigma)))$)}
 for any $k\in\mathbb{N}$ (resp. $k\in\mathbb{Z}$).
\end{rem}

\begin{defn}\label{ss}
	The \textit{order} of a non-zero element $p$ in either $R[[X;\sigma]]$ or $R((X;\sigma))$ is the least power of $X$ in $p$ with a non-zero coefficient, and that coefficient is the \textit{leading coefficient}.
\end{defn}
\vspace{0.1cm}
For simplicity, we denote the order of $p$ by $\mathrm{ord}(p)$ \label{ord} and the leading coefficient of $p$ by $\mathrm{lc}(p)$. \label{lc} Additionally, for any $f=\sum\limits_{i=0}^{\infty} r_i X^i\in R[[X;\sigma]]$ provided that $\mathrm{ord}(f)=n$ for some $n\in\mathbb{N}\cup\{0\}$ (resp. $ f=\sum\limits_{i=m}^{\infty} r_i X^i\in R((X;\sigma))$, $m\in\mathbb{Z}$ provided that $\mathrm{ord}(f)=m$), we can write it as
$f=r_{n}X^{n}+H.T.$ (resp. $f=r_{m}X^{m}+H.T.$) where $H.T.$ \label{HT} denotes higher terms of $f$.\\
\vspace{0.1cm}

\section{Results}\label{s3}
\subsection*{The Right and  Left Quasi-Associative Properties of a Non-associative Ring}\hfill\break

We introduce a property for a non-associative ring $R$ which will be used in our work on the Hilbert Basis Theorem.

\begin{defn}\label{311}
    Let $R$ be a non-associative ring. $R$ is said to be \textit{right} (resp. \textit{left}) \textit{quasi-associative} if for all $r,b,c\in R$, there exists $s\in R$ such that $(rb)c=rs$ (resp. $r(bc)=sc$). If $R$ is both right and left quasi-associative, then $R$ is said to be \textit{quasi-associative}.
\end{defn}

Equivalently, Definition \ref{311} means that $(rb)c\in rR:=\{rt\;|\;t\in R\}$ (resp. $r(bc)\in Rc:=\{tc\;|\;t\in R\}$).

\begin{rem}\label{32}
	If $R$ is associative, then $R$ is trivially right (resp. left) quasi-associative because we can take $s:=bc$ (resp. $s:=rb$) so that $(rb)c=r(bc)=rs$ (resp. $r(bc)=(rb)c=sc$). In particular, $R$ is quasi-associative.
\end{rem}

It is important to note that not all non-associative rings satisfy right quasi-associativity or left quasi-associativity. Consider the following example.

\begin{ex}\label{3111111}
 \textnormal{Let $R:=M_{2}(\mathbb{R})$ be the set of all $2\times2$ matrices whose entries are from $\mathbb{R}$ with usual matrix addition and multiplication $\cdot$ is defined as:\\ $A\cdot B:=((A\cdot B)_{ij})$ where} 
    \[(A\cdot B)_{11}:=A_{11}B_{11},\; (A\cdot B)_{12}:=A_{11}B_{12} + A_{12}B_{11}, \;(A\cdot B)_{21}:=A_{11}B_{21} + A_{21}B_{11} + A_{21}B_{12}\] \[(A\cdot B)_{22}:=A_{11}B_{22} + A_{22}B_{11} + A_{22}B_{21}\]
\textnormal{This is a non-associative ring where the new product satisfies left and right distributivity since it inherits the distributivity of $\mathbb{R}$ and the identity element is $E_{11}$ since for any $A=(a_{ij})$, $E_{11}\cdot A$ has  entries $(E_{11}\cdot A_{11})=1A_{11}=A_{11}$, $(E_{11}\cdot A)_{12}=1A_{12} + 0A_{11}=A_{12}$, $(E_{11}\cdot A)_{21}=1A_{21} + 0A_{11} + 0A_{12}=A_{21}$, and $(E_{11}\cdot A)_{22}=1A_{22} + 0A_{11} + 0A_{21}=A_{22}$ so $E_{11}\cdot A=A$. Likewise, $A\cdot E_{11}$ has entries $(A\cdot E_{11})_{11}=A_{11}1=A_{11}$, $(A\cdot E_{11})_{12}=A_{11}0 + A_{12}1=A_{12}$, $(A\cdot E_{11})_{21}=A_{11}0 + A_{21}1 + A_{21}0=A_{21}$, and $(A\cdot E_{11})_{22}=A_{11}0 + A_{22}1 + A_{22}0=A_{22}$ so $A\cdot E_{11}=A$. The new multiplication is not associative. To see this, let $A=E_{22}$, $B=E_{21}$, $C=E_{12}$. Then 
$A\cdot B$ has entries $(A\cdot B)_{11}=(0)(0)=0$, $(A\cdot B)_{12}=(0)(0) + (0)(0)=0$, $(A\cdot B)_{21}=(0)(1) + (0)(0) + (0)(0)=0$, and $(A\cdot B)_{22}=(0)(0) + (1)(0) + (1)(1)=1$. So, $A\cdot B=E_{22}$. Then
$(A\cdot B)\cdot C=E_{22}\cdot C$ where $E_{22}\cdot C$ has entries $(E_{22}\cdot C)_{11}=(0)(0)=0$, $(E_{22}\cdot C)_{12}=(0)(1) + (0)(0)=0$, $(E_{22}\cdot C)_{21}=(0)(0) + (0)(0) + (0)(1)=0$, and $(E_{22}\cdot C)_{22}=(0)(0) + (1)(0) + (1)(0)=0$. So, $(A\cdot B)\cdot C$ is a zero matrix. On the other hand, for $A\cdot (B\cdot C)$, the product $B\cdot C$ has entries $(B\cdot C)_{11}=(0)(0)=0$, $(B\cdot C)_{12}=(0)(1) + (0)(0)=0$, $(B\cdot C)_{21}=(0)(0) + (1)(0) + (1)(1)=1$, and $(B\cdot C)_{22}=(0)(0) + (0)(0) + (0)(0)=0$. So, $B\cdot C=E_{21}$. Then $A\cdot (B\cdot C)=A\cdot E_{21}$ where $A\cdot E_{21}$ has entries $(A\cdot E_{21})_{11}=(0)(0)=0$, $(A\cdot E_{21})_{12}=(0)(0) + (0)(0)=0$, $(A\cdot E_{21})_{21}=(0)(1) + (0)(0) + (0)(0)=0$, and $(A\cdot E_{21})_{22}=(0)(0) + (1)(0) + (1)(1)=1$. So, $A\cdot (B\cdot C)=E_{22}$. However, $(A\cdot B)\cdot C\neq A\cdot(B\cdot C)$.}

\textnormal{Now, we will show that $R$ satisfies right quasi-associativity but fails to be left quasi-associative. Let $A,B,C\in R$ and let $M\in R$ such that $M=(A\cdot B)\cdot C$. Let $P\in R$ such that $A\cdot B=P$. The entries of $P$ are $P_{11}=A_{11}B_{11}$, $P_{12}=A_{11}B_{12} + A_{12}B_{11}$, $P_{21}=A_{11}B_{21} + A_{21}(B_{11}+B_{12})$, and $P_{22}=A_{11}B_{22} + A_{22}(B_{11}+B_{21})$. Then $M=P\cdot C$ yields to a matrix with entries:}
\begin{align*}
    M_{11} &= A_{11}B_{11}C_{11} \\
M_{12} &= A_{11}(B_{11}C_{12} + B_{12}C_{11}) + A_{12}B_{11}C_{11} \\
M_{21} &= A_{11}(B_{11}C_{21} + B_{21}(C_{11}+C_{12})) + A_{21}(B_{11}+B_{12})(C_{11}+C_{12}) \\
M_{22} &= A_{11}(B_{11}C_{22} + B_{22}(C_{11}+C_{21})) + A_{22}(B_{11}+B_{21})(C_{11}+C_{21}).
\end{align*}
\textnormal{We construct $S\in R$ such that $A\cdot S=M$. The product $A\cdot S$ results in a matrix with entries:}
\begin{align*}
    (A\cdot S)_{11}&=A_{11}S_{11}\\
     (A\cdot S)_{12}&=A_{11}S_{12} + A_{12}S_{11}\\
      (A\cdot S)_{21}&=A_{11}S_{21} + A_{21}(S_{11}+S_{12})\\
       (A\cdot S)_{22}&=A_{11}S_{22} + A_{22}(S_{11}+S_{21}).
\end{align*}
\textnormal{We analyze two cases: If $A_{11}\neq 0$, then the entries of $S$ are:}
\begin{align*}
    S_{11}&=B_{11}C_{11}\\
    S_{12}&=B_{11}C_{12} + B_{12}C_{11}\\
    S_{21}&=B_{11}C_{21} + B_{21}(C_{11}+C_{12}) + \frac{A_{21}}{A_{11}}B_{12}C_{12}\\
    S_{22}&=B_{11}C_{22} + B_{22}(C_{11} + C_{21}) + \frac{A_{22}((B_{11} + B_{21})(C_{11} + C_{21}) - (B_{11}C_{11} + S_{21}))}{A_{11}}\\
\end{align*}
\textnormal{If $A_{11}=0$, then the entries of $M$ simplifies  to:}
\[
M_{11} = 0, \quad M_{12} = A_{12}B_{11}C_{11}, \quad M_{21} = A_{21}(B_{11}+B_{12})(C_{11}+C_{12})
\]
\[
 M_{22} = A_{22}(B_{11}+B_{21})(C_{11}+C_{21})
\]
\textnormal{Define $S\in R$ with entries}
\[
S_{11}= B_{11}C_{11}, \quad S_{12}= (B_{11}+B_{12})(C_{11}+C_{12}) - B_{11}C_{11},
\]
\[
S_{21}= (B_{11}+B_{21})(C_{11}+C_{21}) - B_{11}C_{11}, \quad S_{22}= 0.
\]
\textnormal{Using this matrix $S$, the product $A\cdot S$ has entries}
\begin{align*}
(A \cdot S)_{11} &= A_{11}S_{11} \\
&= (0)S_{11} =0\\[8pt]
(A \cdot S)_{12} &= A_{11}S_{12} + A_{12}S_{11} \\
&= (0)S_{12} + A_{12}(B_{11}C_{11}) \\
&= A_{12}B_{11}C_{11} \\[8pt]
(A \cdot S)_{21} &= A_{11}S_{21} + A_{21}(S_{11} + S_{12}) \\
&= (0)S_{21} + A_{21}( B_{11}C_{11} + ((B_{11} + B_{12})(C_{11} + C_{12}) - B_{11}C_{11})) \\
&= A_{21}( B_{11}C_{11} - B_{11}C_{11} + (B_{11} + B_{12})(C_{11} + C_{12})) \\
&= A_{21}(B_{11} + B_{12})(C_{11} + C_{12}) \\[8pt]
(A \cdot S)_{22} &= A_{11}S_{22} + A_{22}(S_{11} + S_{21}) \\
&= (0)(0) + A_{22}( B_{11}C_{11} + ((B_{11} + B_{21})(C_{11} + C_{21}) - B_{11}C_{11})) \\
&= A_{22}( B_{11}C_{11} - B_{11}C_{11} + (B_{11} + B_{21})(C_{11} + C_{21})) \\
&= A_{22}(B_{11} + B_{21})(C_{11} + C_{21}).
\end{align*}
\textnormal{which coincides with each entries of $M$. For each cases, we have shown that there exists $S\in R$ satisfying right quasi-associativity. On the other hand,  suppose $R$ satisfies left quasi-associativity. Let $A=E_{22}$, $B=E_{21}$, $C=E_{12}$. From earlier, $A\cdot (B\cdot C)=E_{22}$. Let $S\in R$ satisfying left quasi-associativity. The product $S\cdot C$ results to a matrix whose has entries are}
\begin{align*}
    (S\cdot C)_{11}&=S_{11}(0)=0\\
    (S\cdot C)_{12}&=S_{11}(1) + S_{12}(0)=S_{11}\\
    (S\cdot C)_{21}&=S_{11}(0) + S_{21}(0) + S_{21}(1)=S_{21}\\
    (S\cdot C)_{22}&=S_{11}(0) + S_{22}(0) + S_{22}(0)=0.
\end{align*}
\textnormal{Since $R$ is left quasi-associative by assumption, $A\cdot(B\cdot C)=E_{22}=S\cdot C$. But observe that $(S\cdot C)_{22}=0\neq 1=(E_{22})_{22}$ which is a contradiction. This shows that there is no $S\in R$ that satisfies left quasi-associativity.}
\end{ex}

\textnormal{We have the following characterization for a non-associative ring $R$ to be right (resp. left) quasi-associative.}

\begin{prop}\label{3113}
    Let $R$ be a non-associative ring.  $R$ is right (resp. left) quasi-associative if and only if $rR$ (resp. $Rc$) is a right (resp. left) ideal of $R$ for all $r\in R$ (resp. $c\in R$). 
\end{prop}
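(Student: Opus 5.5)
We prove the right-hand version; the left-hand version is the mirror image, obtained by reversing all products. The argument compares Definition \ref{311}, read as $(rb)c\in rR$, with Definition \ref{nideal} for right ideals.

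We first record what holds in any non-associative ring. For a fixed $r\in R$ the set $rR=\{rt\mid t\in R\}$ is an additive subgroup of $R$. It is nonempty since $0=r0\in rR$. It is closed under addition and negation by left distributivity: $rt+rt'=r(t+t')$ and $-(rt)=r(-t)$, the latter because $r(-t)+rt=r0=0$. So for $rR$ to be a right ideal, only the condition $(rR)c\subseteq rR$ for all $c\in R$ remains to be checked.

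($\Rightarrow$) Assume $R$ is right quasi-associative and fix $r\in R$. Take an element of $(rR)c$; it has the form $(rb)c$ with $b,c\in R$. Definition \ref{311} gives $s\in R$ with $(rb)c=rs\in rR$. Hence $(rR)c\subseteq rR$ for every $c$, and together with the subgroup property, $rR$ is a right ideal.

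($\Leftarrow$) Assume $rR$ is a right ideal for every $r$. Given $r,b,c\in R$, we have $rb\in rR$, so $(rb)c\in (rR)c\subseteq rR$. Thus $(rb)c=rs$ for some $s\in R$, which is exactly right quasi-associativity.

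For the left case, use right distributivity to show that $Rc$ is an additive subgroup. Then $r(bc)$ with $r\in R$ and $bc\in Rc$ lies in $Rc$ precisely when $R(Rc)\subseteq Rc$, i.e.\ when $Rc$ is a left ideal.

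No step here is a genuine obstacle. The only point needing care is that Definition \ref{nideal} also requires the additive subgroup property, and this must come from distributivity alone, never from associativity. No unit in $R$ is required anywhere in the argument.
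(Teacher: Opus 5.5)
Your proposal is correct and matches the paper's proof. Both check that $rR$ is an additive subgroup using only left distributivity (the paper does this via $rp-rq=r(p-q)$), then observe that the right absorption condition $(rR)c\subseteq rR$ is exactly the statement $(rb)c\in rR$ for all $b,c\in R$, and handle the left version symmetrically.
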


\begin{proof}
    Let $r,b,c\in R$. Note that $rR$ is nonempty since $r0=0\in rR$. Let $s,t\in rR$. This means $s=rp$ and $t=rq$ for some $p,q\in R$. Then $s-t=rp-rq=r(p-q)\in rR$ where $p-q\in R$. On the other hand, observe that $rb\in rR$. Suppose $R$ satisfies right quasi-associativity. Then $(rb)c=rs$ for some $s\in R$, which implies $(rb)c\in rR$, showing the right absorption law. Thus, $rR$ is a right ideal of $R$.

    Conversely, assume $rR$ is a right ideal of $R$. Note that $rb\in rR$. By the right absorption law, $(rb)c\in rR$. Therefore, $R$ satisfies right quasi-associativity.

    The analogous argument follows for left quasi-associativity.
\end{proof}

\begin{ex}\label{3114}
     \textnormal{The octonions $\mathbb{O}$ over $\mathbb{R}$ is quasi-associative. Let $r\in\mathbb{O}$. If $r=0$, then $r\mathbb{O}=0\mathbb{O}=\{0\}$ which is trivially a right ideal. For $r\neq 0$, obviously $r\mathbb{O}\subseteq\mathbb{O}$. Now, let $s\in\mathbb{O}$. Since $\mathbb{O}$ is a division algebra \cite{Baez2002}, the equation $rx=s$ has a unique solution $x\in\mathbb{O}$ provided that $r\neq 0$ \cite{Schafer1955}. This implies $s\in r\mathbb{O}$ so $\mathbb{O}\subseteq r\mathbb{O}$. Hence, $r\mathbb{O}=\mathbb{O}$, which is obviously a right ideal. On the other hand, we can show that the set $\mathbb{O}c$ is a left ideal for every $c\in\mathbb{O}$ by using the analogous argument.}
\end{ex}

Example \ref{3114} leads to the following observation.

\begin{prop}\label{311116}
    If $R$ is a non-associative division ring, then $R$ is quasi-associative.
\end{prop}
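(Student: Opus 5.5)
The plan is to mimic the argument of Example \ref{3114}, replacing the appeal to octonion-specific facts by the defining property of a non-associative division ring. We take this property to mean that for every nonzero $a\in R$ and every $b\in R$, the equations $ax=b$ and $ya=b$ have (unique) solutions $x,y\in R$. Equivalently, the maps $x\mapsto ax$ and $y\mapsto ya$ are bijections of $R$ whenever $a\neq 0$. By Proposition \ref{3113}, it suffices to show that $rR$ is a right ideal of $R$ for every $r\in R$, and that $Rc$ is a left ideal of $R$ for every $c\in R$.

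For right quasi-associativity, fix $r\in R$ and split into two cases. If $r=0$, then $rR=\{0\}$, which is trivially a right ideal; one can also see directly that $(0b)c=0=0\cdot s$ for any $s$. If $r\neq 0$, then clearly $rR\subseteq R$. Given any $t\in R$, solvability of $rx=t$ gives $t\in rR$, so $rR=R$, which is a right ideal. Alternatively, and perhaps more transparently with respect to Definition \ref{311}, for given $b,c$ we solve $rs=(rb)c$ for $s$, which is possible since $r\neq 0$. This produces the required $s$ in both cases.

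Left quasi-associativity is handled symmetrically. Fix $c\in R$. If $c=0$, then $Rc=\{0\}$. If $c\neq 0$, then solvability of $yc=t$ for every $t$ gives $Rc=R$, a left ideal. Equivalently, for given $r,b$ we solve $sc=r(bc)$ for $s$. Combining the two parts, $R$ is quasi-associative in the sense of Definition \ref{311}.

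The only delicate point is the definition of a non-associative division ring. If it is instead defined as ``every nonzero element has a two-sided inverse'', then non-associativity prevents us from writing $x=r^{-1}t$ naively. In that case I would fall back on the standard definition (as in \cite{Schafer1955}), under which left and right division are always possible, and state that this is the definition being used. With that convention the argument is just the case split above and involves no real computation.
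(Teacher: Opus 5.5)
Your proof is correct and is essentially the paper's argument: split on $r=0$ versus $r\neq 0$, use solvability of $rx=t$ (resp.\ $yc=t$) to get $rR=R$ (resp.\ $Rc=R$), and conclude via Proposition~\ref{3113}. Your caution about the definition is well placed, because the paper writes the solution as $x=r^{-1}s$, which is not justified in a general non-associative division ring (it needs something like the left inverse property, which holds in $\mathbb{O}$); appealing directly to unique solvability, as you do, avoids this issue.
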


\begin{proof}
     Let $R$ be a non-associative division ring. Let $r\in R$. If $r=0$, then $r\mathbb{O}=0\mathbb{O}=\{0\}$ which is clearly a right ideal. If $r\neq 0$, then for every $s\in R$, the equation $rx=s$ has a unique solution $x=r^{-1}s\in R$. So, $s\in rR$ implying that $R\subseteq rR$. The reverse inclusion is obvious, and so $rR=R$, which is clearly a right ideal. Hence, by Proposition \ref{3113}, $R$ is right quasi-associative. Analogously, the set $Rc$ is a left ideal of $R$ for every $c\in R$ to conclude that $R$ is left quasi-associative. Thus, $R$ is quasi-associative.
\end{proof}

The right and left quasi-associativity is also investigated for direct products. If $A$ and $B$ are right (resp. left) quasi-associative, then $A\times B$ is also right (resp. left) quasi-associative since $A\times B$ inherits the right (resp. left) quasi-associativity in a componentwise manner under multiplication. We have the following remark,
\begin{rem}\label{rem}
    If $A$ and $B$ are right (resp. left) quasi-associative, then $A\times B$ is also right (resp. left) quasi-associative.
\end{rem}

Observe that the converse of Proposition \ref{311116} is false. Consider the non-associative ring $R:=\mathbb{O}\times\mathbb{Z}$. It can be easily verified that $R$ is quasi-associative using Remark \ref{rem}; however, $R$ is not a division ring. To see this, consider the element $(e_{1},2)\in\mathbb{O}\times\mathbb{Z}$. Suppose $(e_{1},2)$ has a multiplicative inverse $(b,b')\in\mathbb{O}\times\mathbb{Z}$. Then $(e_{1},2)(b,b')=(1_{\mathbb{R}},1_{\mathbb{Z}})$ where $(1_{\mathbb{R}},1_{\mathbb{Z}})$ is the identity element in $\mathbb{O}\times\mathbb{Z}$. Here, $b=-e_{1}$ and $b'=\frac{1}{2}$ but $\frac{1}{2}\notin\mathbb{Z}$ which is a contradiction. So, $(e_{1},2)$ has no multiplicative inverse.
\vspace{0.1cm}

The behavior of right (resp. left) quasi-associativity has also been explored between non-associative coefficient ring $R$ and their corresponding non-associative skew power series rings $R[[X;\sigma]]$ and non-associative skew Laurent series rings $R((X;\sigma))$. We have the following observation.

\begin{prop}\label{inherit}
    Let $R$ be a unital non-associative ring and $\sigma:R\longrightarrow  R$ be an additive map such that $\sigma(1)=1$. If $R[[X;\sigma]]$ is right (resp. left) quasi-associative, then $R$ is also right (resp. left) quasi-associative.
\end{prop}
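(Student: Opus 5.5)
The idea is to embed $R$ into $R[[X;\sigma]]$ as the constant series, so that products of constants are computed in degree $0$. Quasi-associativity of $R[[X;\sigma]]$ then yields a series, and we keep only its constant coefficient.

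First we check that the inclusion $r\mapsto rX^0$ is multiplicative. By Definition \ref{213} with $m=n=0$, we have $(rX^0)(sX^0)=(r\sigma^0(s))X^0=(rs)X^0$, since $\sigma^0$ is the identity. Hence for $r,b,c\in R$,
\[
\bigl((rX^0)(bX^0)\bigr)(cX^0)=((rb)c)X^0
\quad\text{and}\quad
(rX^0)\bigl((bX^0)(cX^0)\bigr)=(r(bc))X^0 .
\]

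For the right case, assume $R[[X;\sigma]]$ is right quasi-associative and fix $r,b,c\in R$. Definition \ref{311}, applied to the constant series $r,b,c$, gives a series $g=\sum_{j\ge 0}s_jX^j\in R[[X;\sigma]]$ with $((rb)c)X^0=(rX^0)g$. Expanding with the product formula gives
\[
(rX^0)g=\sum_{j\ge0} (r\,\sigma^0(s_j))X^j=\sum_{j\ge 0}(rs_j)X^j .
\]
Comparing the coefficients of $X^0$ yields $(rb)c=rs_0$, so $s:=s_0$ witnesses right quasi-associativity of $R$. The higher coefficients only force $rs_j=0$ for $j\ge1$, which we do not need.

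The left case needs slightly more care, since the unknown series now sits on the left. Assume $R[[X;\sigma]]$ is left quasi-associative. For $r,b,c\in R$ we get $g=\sum_i s_iX^i$ with $(r(bc))X^0=g\,(cX^0)$. Expanding,
\[
g\,(cX^0)=\sum_{i\ge 0}(s_i\sigma^i(c))X^i .
\]
Its $X^0$ coefficient is $s_0\sigma^0(c)=s_0c$, so $r(bc)=s_0c$, and $s:=s_0$ works. The only potential obstacle is that the twisting by $\sigma$ could alter the constant term. It does not, because only $i=0$ contributes to degree $0$ and $\sigma^0=\mathrm{id}$, so neither additivity of $\sigma$ nor $\sigma(1)=1$ is actually used.

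Equivalently, we could phrase the argument through Proposition \ref{3113}: the right ideal $(rX^0)R[[X;\sigma]]$ has set of constant terms equal to $rR$, and we could verify absorption directly. The coefficient comparison above is more direct, though, so it is the route we will take.
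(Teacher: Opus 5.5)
Your proposal is correct and follows the same route as the paper: view $r,b,c$ as constant series, apply quasi-associativity in $R[[X;\sigma]]$, and read off the $X^0$ coefficient of $rg$ (resp.\ $gc$) to get $(rb)c=rs_0$ (resp.\ $r(bc)=s_0c$). Your direct coefficient comparison is slightly cleaner than the paper's appeal to $\mathrm{ord}$, because it also covers the case $(rb)c=0$ (resp.\ $r(bc)=0$), where the order is undefined.
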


\begin{proof}
    Let $r,b,c\in R$. Suppose $R[[X;\sigma]]$ is right quasi-associative. Then there exists $f=\sum\limits_{i=0}^{\infty}r_{i}X^{i}\in R[[X;\sigma]]$ such that $(rb)c=rf=r\left(\sum\limits_{i=0}^{\infty}r_{i}X^{i}\right)=\sum\limits_{i=0}^{\infty}(rr_{i})X^{i}$. Since $r,b,c$ are viewed as constant formal power series, respectively, in $R[[X;\sigma]]$, they have order $0$. So, $\mathrm{ord}((rb)c)=0=\mathrm{ord}\left(\sum\limits_{i=0}^{\infty}(rr_{i})X^{i}\right)$. This implies $(rb)c=rr_{0}$ for some $r_{0}\in R$. Thus, $R$ is  right quasi-associative.

For the left version, suppose $R[[X;\sigma]]$ is left quasi-associative. Then there exists $g=\sum\limits_{i=0}^{\infty}s_{i}X^{i}\in R[[X;\sigma]]$ such that $r(bc)=gc=\left(\sum\limits_{i=0}^{\infty}s_{i}X^{i}\right)c=\sum\limits_{i=0}^{\infty}s_{i}X^{i}c=\sum\limits_{i=0}^{\infty}s_{i}\sigma^{i}(c)X^{i}$. By the analogous argument in the right version, this implies $r(bc)=s_{0}\sigma^{0}(c)=s_{0}c$ for some $s_{0}\in R$. Hence, $R$ is left quasi-associative.
\end{proof}

Using the analogous argument from Proposition \ref{inherit}, we have the following observation for $R((X;\sigma))$.

\begin{prop}\label{013}
       Let $R$ be a unital non-associative ring and $\sigma:R\longrightarrow  R$ be an additive bijection such that $\sigma(1)=1$. If $R((X;\sigma))$ is right (resp. left) quasi-associative, then $R$ is also right (resp. left) quasi-associative.
\end{prop}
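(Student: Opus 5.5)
The plan is to imitate the proof of Proposition \ref{inherit}, using $R((X;\sigma))$ in place of $R[[X;\sigma]]$ and comparing coefficients of $X^{0}$ rather than arguing with orders. We regard each $r\in R$ as the constant Laurent series $rX^{0}$. For constant series the twisting rule gives $(rX^0)(sX^0)=(r\sigma^0(s))X^0=(rs)X^0$, so $R$ sits inside $R((X;\sigma))$ as a subring with the same multiplication. Hence $(rb)c$ and $r(bc)$ mean the same thing whether they are computed in $R$ or in $R((X;\sigma))$.

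\textbf{Right version.} Fix $r,b,c\in R$ and assume $R((X;\sigma))$ is right quasi-associative. Then there is $f=\sum_{i=n}^{\infty}r_iX^i\in R((X;\sigma))$, for some $n\in\mathbb{Z}$, with $(rb)c=rf$. By the multiplication rule,
\[
rf=\sum_{i=n}^{\infty}(rX^0)(r_iX^i)=\sum_{i=n}^{\infty}(r\sigma^0(r_i))X^i=\sum_{i=n}^{\infty}(rr_i)X^i .
\]
Two formal Laurent series are equal exactly when their coefficient sequences agree, so we compare coefficients of $X^0$. On the left the coefficient is $(rb)c$. On the right it is $rr_0$, where we set $r_0:=0$ if $n>0$. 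This gives $(rb)c=rr_0$ with $r_0\in R$, so $R$ is right quasi-associative. The only small point needing care is the case $n>0$: then $(rb)c=0=r0$, which still has the required form.

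\textbf{Left version.} Assume $R((X;\sigma))$ is left quasi-associative and write $r(bc)=gc$ with $g=\sum_{i=m}^{\infty}s_iX^i$. Then
\[
gc=\sum_{i=m}^{\infty}(s_iX^i)(cX^0)=\sum_{i=m}^{\infty}(s_i\sigma^i(c))X^i,
\]
which is meaningful for negative $i$ because $\sigma$ is a bijection, so $\sigma^i$ is defined for all $i\in\mathbb{Z}$. The $X^0$ coefficient of $gc$ is $s_0\sigma^0(c)=s_0c$, again with $s_0:=0$ if $m>0$. Comparing with the left side gives $r(bc)=s_0c$, so $R$ is left quasi-associative.

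I do not expect a real obstacle. The one place where the argument differs from Proposition \ref{inherit} is that negative powers may appear in $f$ and $g$. The coefficient comparison at $X^0$ deals with this directly: it forces all other coefficients of $rf$ (respectively $gc$) to vanish, but we never need that, since only the degree-$0$ coefficient is used.
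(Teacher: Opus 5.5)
Your proposal is correct and matches the paper's argument: the paper proves this proposition by the same reasoning as Proposition \ref{inherit}, regarding $r,b,c$ as constant series and reading off the degree-zero coefficient of $rf$ (resp.\ $gc$). Your explicit comparison of $X^{0}$-coefficients, with $r_0:=0$ (resp.\ $s_0:=0$) when the series begins at a positive power, is slightly cleaner than the paper's order-based phrasing, because it also covers the case $(rb)c=0$ (resp.\ $r(bc)=0$), where the order is undefined.
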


However, the converse of Proposition \ref{013} is false.
\begin{ex}\label{0013}
    Let $R := M_2(\mathbb{R})$ with usual addition and multiplication of matrices. Since $R$ is associative, then $R$ is trivially right quasi-associative. Let $\sigma: R\longrightarrow R$ be the transpose map, that is, $\sigma(A)=A^{T}$ for every $A\in R$ where $A^{T}$ is the transpose matrix of $A$. This is obviously an additive bijection map on $R$ which preserves the identity matrix $I$, that is, $\sigma(I)=I^{T}=I$.

    Consider the non-associative skew Laurent series ring $R((X;\sigma))$ and elements $r=E_{11}+E_{22}X$, $b=E_{11}+E_{12}$, $c=E_{12}\in R((X;\sigma))$. Then
    \[
    (rb)c=(E_{11}b+E_{22}b^{T}X)(E_{12})=E_{11}bc+E_{22}b^{T}c^{T}X=E_{12}
    \]
    where $E_{11}bc=E_{12}$ and $E_{22}b^{T}c^{T}$ is the zero matrix. Suppose $R((X;\sigma))$ is right quasi-associative, then there exists some $f=\sum\limits_{i=n}^{\infty}r_{i}X^{i}\in R((X;\sigma))$ for some $n\in\mathbb{Z}$ such that $(rb)c=rf$. This implies that $\mathrm{ord}(rf)=0$. Particularly, the product $rf$ is
    \[
    rf=\sum\limits_{i=n}^{\infty}(E_{11}r_{i}X^{i}+E_{22}r_{i}^{T}X^{i+1})
    \]
    For $i=0$, the corresponding term is given by $E_{11}r_{0}+E_{22}r_{0}^{T}X$.
    Since $(rb)c=rf$, then $E_{12}=E_{12}+0X=E_{11}r_{0}+E_{22}r_{0}^{T}X$. Note that $r_{0}:=E_{12}$ so that $E_{11}r_{0}=E_{12}$. Getting the transpose, $r_{0}^{T}=E_{21}$. Now, the product $E_{22}E_{21}=E_{21}$ which is not the zero matrix. This is a contradiction. Thus, $R((X;\sigma))$ is not right quasi-associative.

    For the left version, consider the elements $r=E_{12}X^{-1}$, $b=E_{21}$, $c=E_{11}+E_{12}X\in R((X;\sigma))$. Note that the inverse map of $\sigma$, $\sigma^{-1}$ is also the transpose map on $R$. Then
    \begin{align*}
    r(bc)=E_{12}X^{-1}(bE_{11}+bE_{12}X)=E_{12}X^{-1}(E_{21}+E_{22}X)&=E_{12}E_{12}X^{-1}+E_{12}E_{22}\\
    &=E_{12}.
    \end{align*}
    Suppose $R((X;\sigma))$ is left quasi-associative, then there exists some $g=\sum\limits_{i=m}^{\infty}r_{i}X^{i}\in R((X;\sigma))$ for some $m\in\mathbb{N}$ such that $r(bc)=gc$. This implies that $\mathrm{ord}(gc)=0$. Particularly, the product $gc$ is
    \[
    gc=\sum\limits_{i=m}^{\infty}(r_{i}E_{11}X^{i}+r_{i}\sigma^{i}(E_{12})X^{i+1})
    \]
    For $i=0$, the corresponding term is $r_{0}E_{11}+r_{0}E_{12}X$. Since $r(bc)=gc$, then $E_{12}=r_{0}E_{11}+r_{0}E_{12}X$. However, no such $r_{0}\in R$ satisfy this equation because $(r_{0}E_{11})_{12}=0\neq 1=(E_{12})_{12}$. This is a contradiction to the existence of $g$. Thus, $R((X;\sigma))$ is not left quasi-associative.
\end{ex}
 Example \ref{0013} also shows that the converse of Proposition \ref{inherit} is also false by considering $R[[X;\sigma]]$ and applying the similar argument.
\vspace{0.1cm}

\subsection*{The Hilbert Basis Theorem for Non-associative Skew Power Series Rings and Non-associative Skew Laurent Series Rings}\hfill\break

Before we prove the Hilbert Basis Theorem of $R[[X;\sigma]]$ for right Noetherianity, we have the following lemma that gives an explicit expression for any nonzero element of a nonzero right ideal of $R[[X;\sigma]]$ whenever $R$ is right Noetherian and $\sigma:R\longrightarrow R$ is a surjective map.
\vspace{0.1cm}
\begin{lem}\label{34}
	Let $R$ be a unital non-associative right Noetherian ring and let $\sigma:R\longrightarrow R$ be an additive surjective map such that $\sigma(1)=1$. Let $P$ be a nonzero right ideal of $R[[X;\sigma]]$ and let a nonzero element $p\in P$ where $u:=\mathrm{ord}(p)$ and $v:=\mathrm{lc}(p)$. Define $p_{0}:=p$. For some $n\in\mathbb{N}$, there exist
$f_{1},\ldots,f_{n}\in P$ with
$d_{i}:=\mathrm{ord}(f_{i})$
and $N:=\max\{d_{1},\ldots,
d_{n}\}$ such that if $u\geq N$, then
\[
p=\sum\limits_{i=1}^{n}\sum\limits_{j=1}^{n_{i}}\sum\limits_{l=1}^{k_{ij}}\Bigg(\sum\limits_{m=0}^{\infty}(\cdots((f_{i}t_{ijl1_{m}}')t_{ijl2_{m}}')\cdots)t_{ijlj_{m}}')X^{\mathrm{ord}(p_{m})-d_{i}}\Bigg)
\]
	for some $n_{i}, k_{ij}\in\mathbb{N}$, $t'_{ijl1_{m}}, t'_{ijl2_{m}},\ldots,t'_{ijlj_{m}}\in R$, and $p_{m}\in R[[X;\sigma]]$ for $m\geq 1$.
If $u<d_{j}$ for some $1\leq j\leq n$, then there exist $g_{u,1},\ldots,g_{u,e}\in P$ for some $e\in\mathbb{N}$ such that
\[
p=\sum\limits_{z=1}^{e}\sum\limits_{j=1}^{e_{z}}\sum\limits_{l=1}^{k_{zj}}\Bigg(\sum\limits_{m=0}^{\infty}(\cdots((g_{u,z}v_{u,zjl1_{m}}')v_{u,zjl2_{m}}')\cdots)v_{u,zjlj_{m}}'X^{\mathrm{ord}(p_{m})-u}\Bigg)
\]
for some $e_{z}, k_{zj}\in\mathbb{N}$, $v_{u,zjl1_{m}}',v_{u,zjl2_{m}}',\ldots,v_{u,zjlj_{m}}'\in R$, and $p_{m}\in R[[X;\sigma]]$ for $m\geq 1$.
\end{lem}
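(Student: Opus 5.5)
This is the non-associative version of the standard power-series Hilbert basis argument: a leading-coefficient ideal construction followed by an order-raising successive approximation. We never use associativity of $R$ itself. We only use two facts. First, $X^{k}$ lies in the middle and right nuclei of $R[[X;\sigma]]$ (Remark \ref{316}), so $(\cdots)X^{k}$ can be split off from iterated right products. Second, surjectivity of $\sigma$ lets us pull coefficients past powers of $X$: for $f$ of order $d$ and $t\in R$, choosing $t'$ with $\sigma^{d}(t')=t$ gives $\mathrm{lc}(ft')=\mathrm{lc}(f)t$.

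\textbf{Step 1: the leading-coefficient ideals.} For each $k\geq 0$ let $L_{k}$ be the set of leading coefficients of elements of $P$ of order $k$, together with $0$. We check that $L_{k}$ is a right ideal of $R$.
\begin{itemize}
\item Closure under subtraction: if $a,b\in L_{k}$ come from $f,g\in P$ of order $k$, then $f-g\in P$ has order $k$ with leading coefficient $a-b$, or else $a-b=0$.
\item Right absorption: for $c\in R$, pick $c'$ with $\sigma^{k}(c')=c$. Then $fc'\in P$ has leading term $(a\sigma^{k}(c'))X^{k}=(ac)X^{k}$.
\end{itemize}
Multiplying by $X$, which is allowed since $P$ is a right ideal and $\sigma(1)=1$ gives $aX^{k}\cdot X=aX^{k+1}$, shows $L_{k}\subseteq L_{k+1}$. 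Since $R$ is right Noetherian, the chain stabilises at some $L_{N}$. Each $L_{k}$ with $k\leq N$ is finitely generated (Remark \ref{239}). For the top ideal $L_{N}$ we choose generators as leading coefficients of $f_{1},\ldots,f_{n}\in P$. By multiplying by powers of $X$ we may take these of orders $d_{i}\leq N$ with $N=\max d_{i}$. For each $u<N$ we choose $g_{u,1},\ldots,g_{u,e}\in P$ of order $u$ whose leading coefficients generate $L_{u}$.

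\textbf{Step 2: one approximation step.} Take $p_{0}=p$ with $u\geq N$. Then $\mathrm{lc}(p_{0})\in L_{u}=L_{N}$, so by (2.1) it is a finite sum of iterated right products $(\cdots((\mathrm{lc}(f_{i})t_{1})t_{2})\cdots)t_{j}$. Replace each $t_{s}$ by a preimage $t'_{s}$ under the appropriate power $\sigma^{d_{i}}$. Since $\mathrm{ord}(f_{i})=d_{i}$ is unchanged after multiplying by constants, the leading term of $(\cdots(f_{i}t'_{1})\cdots)t'_{j}$ is $(\cdots(\mathrm{lc}(f_{i})t_{1})\cdots)t_{j}X^{d_{i}}$. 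Then multiply on the right by $X^{u-d_{i}}$; by Remark \ref{316} this does not disturb the bracketing. The resulting element $q_{0}\in P$ has the same order and leading coefficient as $p_{0}$. Set $p_{1}:=p_{0}-q_{0}\in P$, so $\mathrm{ord}(p_{1})>\mathrm{ord}(p_{0})$.

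\textbf{Step 3: iteration and convergence.} Repeating Step 2 produces $p_{m}$ of strictly increasing order and $q_{m}$ built from the same $f_{i}$ with exponents $X^{\mathrm{ord}(p_{m})-d_{i}}$. The partial sums $\sum_{m}q_{m}$ converge formally, since in each degree only finitely many terms contribute. Because $p_{m}\to 0$ formally, $p=\sum_{m}q_{m}$. Regrouping by $(i,j,l)$ gives the displayed formula. Padding with zero coefficients lets us use uniform index sets $n_{i},k_{ij}$ over all $m$.

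If $u<N$, we first apply Step 2 using the $g_{u,z}$ of order $u$. The remainder has order greater than $u$; we keep using the generators of the relevant $L_{\mathrm{ord}(p_{m})}$ until the order reaches $N$, and from then on switch to the $f_{i}$. The statement as written writes everything in terms of $g_{u,z}$ with exponent $\mathrm{ord}(p_{m})-u$. To match it, we take the $g_{u,z}$ to be the combined finite list of all generators of orders between $u$ and $N$ together with the $f_{i}$, absorbing the order shifts into the exponents.

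\textbf{Main obstacle.} The delicate point is justifying that the infinite formal sum equals $p$ and can legitimately be regrouped into the nested form. Equally delicate is the exponent bookkeeping: we must use the nucleus property of $X^{k}$ and surjectivity of $\sigma$ at every level of the non-associative bracketing, not merely for a single product. To handle this, we would prove by induction on the bracket length $j$ that $(\cdots(f\,t'_{1})\cdots)t'_{j}$ has order $\mathrm{ord}(f)$ and leading coefficient $(\cdots(\mathrm{lc}(f)t_{1})\cdots)t_{j}$, or a higher order if that product vanishes.
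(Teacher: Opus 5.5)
\textbf{The gap is the regrouping at the end of Step~3.} You write: ``Padding with zero coefficients lets us use uniform index sets $n_i,k_{ij}$ over all $m$.'' Padding means adding zero summands and extra right factors $1$. It gives finite uniform $n_i,k_{ij}$ only if the number and the lengths of the iterated products needed to express $\mathrm{lc}(p_m)$ through the generators are bounded independently of $m$. Nothing in your argument supplies such a bound, and in general there is none.

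Here is a concrete failure. Take $R=A\times M$ from Example~\ref{counter} with $\sigma=\mathrm{id}$. It is right Noetherian, since a chain of right ideals $J_i$ stabilizes once the ideals $\{m:(0,m)\in J_i\}$ of $K[t]$ and the projections of $J_i$ to $K[u]$ do. Let $P=\{\sum_k(0,m_k)X^k\}$, which is a right ideal of $R[[X;\sigma]]$ because $(0,m)(b,n)=(0,(b_0+b_1t)m)$.
\begin{itemize}
\item Here $L_k=0\times M$ for every $k$, so $N=0$ and $f_1=(0,1)$ is a legitimate choice.
\item On $P$, right multiplication by a constant $(b,n)$ multiplies the $M$-part of every coefficient by $b_0+b_1t$, and right multiplication by $X^e$ only shifts.
\item So each term $(\cdots((f_1t'_1)t'_2)\cdots)t'_j\,X^e$ equals $(0,g)X^e$ with $\deg g\le j$.
\item Hence every expression of the displayed shape has all coefficients of $t$-degree at most $n_1$.
\end{itemize}
But $p=\sum_K(0,t^K)X^K\in P$ has coefficients of unbounded degree. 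In your iteration, $\mathrm{lc}(p_m)=(0,t^m)$ needs products of length at least $m$. The same degree count defeats every finite list $f_1,\dots,f_n$ chosen independently of $p$.

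What the approximation legitimately yields is only $p=\sum_m q_m$, with each $q_m$ a finite sum whose index ranges depend on $m$. The interchange into finitely many series $\sum_m$ is not available. Your ``main obstacle'' paragraph does flag the regrouping. However, the induction on bracket length you propose only controls the leading term of a single product, not this uniformity. The paper's proof performs the same unjustified ``rearranging the sums'', so following it would not close the gap. With the $f_i$ independent of $p$, the displayed identity is false in general.

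Apart from this, your Step~1 (the increasing chain $L_0\subseteq L_1\subseteq\cdots$ and its stabilization) is a cleaner route than the paper's. The paper instead uses the union $Q$ of all leading coefficients together with a minimal-order choice process. Your leading-term computation is correct, and it does not even need Remark~\ref{316}: both $(\sum r_kX^k)c=\sum r_k\sigma^k(c)X^k$ and right multiplication by $X^e$ act coefficientwise.

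Your handling of $u<N$ is also more careful than the paper's. The paper reuses the order-$u$ generators $g_{u,z}$ at every step, although $\mathrm{lc}(p_m)\in L_{\mathrm{ord}(p_m)}$ need not lie in $L_u$. For instance, with $R=\mathbb{Z}$, $\sigma=\mathrm{id}$, $P=2\mathbb{Z}+X\mathbb{Z}[[X]]$, $p=2+X$ and $g_{0,1}=2$, the remainder $X$ has leading coefficient $1\notin L_0$.

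Switching generators, however, does not give the displayed form. A generator of order $u'>u$ needs the exponent $\mathrm{ord}(p_m)-u'$, not $\mathrm{ord}(p_m)-u$. So ``absorbing the order shifts into the exponents'' actually changes the statement, and the regrouping problem above recurs in this case as well.
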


\vspace{0.3cm}
\begin{proof}:  Let $P$ be a nonzero right ideal of $R[[X;\sigma]]$. Define the set
	\begin{equation*}
		Q=:\{0\}\cup \{\mathrm{lc}(f)\;|\;f\in P\setminus\{0\}\}.
	\end{equation*}
	\textbf{Claim}: $Q$ is a right ideal of $R$.
	
	We show that $Q$ is an additive subgroup of $R$. Let $a,b\in Q$, with $a\neq b$. Then there exists $f,g\in P$ such that $a=\mathrm{lc}(f)$ and $b=\mathrm{lc}(g)$. Here, $f-g\in P$. If $\mathrm{ord}(f)=\mathrm{ord}(g)$, then $\mathrm{lc}(f-g)=a-b$ and so $a-b\in Q$. On the other hand, if $\mathrm{ord}(f)\neq\mathrm{ord}(g)$, then consider elements $fX^{\mathrm{ord}(g)}$ and $gX^{\mathrm{ord}(f)}$. These are elements of $P$ and so as $fX^{\mathrm{ord}(g)}-gX^{\mathrm{ord}(f)}$ where
	\begin{equation*}
		\mathrm{ord}(fX^{\mathrm{ord}(g)})=\mathrm{ord}(f)+\mathrm{ord}(g)=\mathrm{ord}(gX^{\mathrm{ord}(f)})
	\end{equation*}
	So the leading term of the difference is $(a-b)X^{\mathrm{ord}(f)+\mathrm{ord}(g)}$. Hence, $a-b\in Q$. On the other hand, let $r\in R$, $q\in Q$. Then there exists $h\in P$ such that $q=\mathrm{lc}(h)$.  Now, observe that $\sigma$ is surjective, so $\sigma^{\mathrm{ord}(h)}$ is also surjective. This implies that there is some $r'\in R$ such that $\sigma^{\mathrm{ord}(h)}(r')=r$. Consider the element $hr'$. Since $P$ is a right ideal, $hr'\in P$ where
	\begin{align*}
		hr'&=(qX^{\mathrm{ord}(h)}+ H.T.)\cdot r'\\
		&=qX^{\mathrm{ord}(h)}r'+H.T.\\
		&=q\sigma^{\mathrm{ord}(h)}(r')X^{\mathrm{ord}(h)}+H.T.\\
		&=qrX^{\mathrm{ord}(h)}+H.T..
	\end{align*}
	Hence, $\mathrm{lc}(hr')=qr\in Q$ and so $Q$ is a right ideal of $R$.
	
	Now, let $f_{1}\in P$ be a nonzero element such that $f_{1}$ has the least order among elements of $P$ and define $q_{1}:=\mathrm{lc}(f_{1})$. Next, choose $f_{2}\in P$ such that $f_{2}$ has the least order among all the elements of $P$ whose leading coefficient is not in the right ideal generated by $q_{1}$ and set $q_{2}:=\mathrm{lc}(f_{2})$. Continue the choice inductively, that is, choose $f_{i}\in P$ such that $f_{i}$ has the least order among all the elements of $P$ whose leading coefficient is not in the right ideal generated by $\{q_{1},\ldots,q_{i-1}\}$ and set $q_{i}:=\mathrm{lc}(f_{i})$. We can choose such $f_{i}\in P$ since the set consisting of the orders of the elements of $P$ is a subset of $\mathbb{N}$ so it satisfies the Well-Ordering Principle (WOP). This process creates an ascending chain of right ideals,
   \begingroup
\setlength{\abovedisplayskip}{7pt}
\setlength{\belowdisplayskip}{7pt}
\begin{align*}
    I_{1}&=\langle \{q_{1}\}\rangle_{r}\subset 
    I_{2}=\langle \{q_{1},q_{2}\}\rangle_{r}\subset\cdots\subset I_{i}=\langle \{q_{1},q_{2},\ldots,q_{i}\}\rangle_{r}\subset\cdots
\end{align*}
\endgroup
		Note that $Q$ consists of all leading coefficients of elements of $P$ so it is a part of the chain. Since $R$ is right Noetherian, it satisfies ACC on right ideals, and the chain stabilizes, that is, there exists $n\in\mathbb{N}$ such that for all $i\geq n$, $I_{n}=I_{i}$. So, the choice process on $f_{i}$ must stop and so $Q=I_{n}$ for some $n\in\mathbb{N}$. Since $R$ is non-associative, every element of $Q$ is of the form
 \begingroup
\setlength{\abovedisplayskip}{8pt}
\setlength{\belowdisplayskip}{17pt}
	\begin{equation*}
		\sum_{i=1}^{n} \sum_{j=1}^{n_{i}}\sum_{l=1}^{k_{ij}} (\cdots ((q_i t_{ijl1}) t_{ijl2}) \cdots ) t_{ijlj}
        \vspace{-0.5cm}
	\end{equation*}
\endgroup
	for some $n_{i},k_{ij}\in\mathbb{N}$, $t_{ij1}, t_{ij2} \ldots, t_{ijj}\in R$. Let $p\in P$ where $u:=\mathrm{ord}(p)$ and $v:=\mathrm{lc}(p)$. Define $d_{i}:=\mathrm{ord}(f_{i})$ and $N:=\max\{d_{1},\ldots, d_{n}\}$.  Consider the following cases:\\
	
		If $u\geq N$: By definition of $Q$, $v\in Q$ so $v=\sum\limits_{i=1}^{n} \sum\limits_{j=1}^{n_{i}}\sum_{l=1}^{k_{ij}} (\cdots ((q_i t_{ijl1}) t_{ijl2}) \cdots ) t_{ijlj}$. Since $\sigma$ is surjective, $\sigma^{d_{i}}$ is also surjective so for $t_{ijl1},\ldots, t_{ijlj}$, there exists \\$t_{ijl1}',\ldots, t_{ijlj}'\in R$ such that 
 \begingroup
\setlength{\abovedisplayskip}{12pt}
\setlength{\belowdisplayskip}{17pt}
		\[
		\sigma^{d_{i}}(t_{ijl1}')=t_{ijl1},\;\;\sigma^{d_{i}}(t_{ijl2}')=t_{ijl2},\ldots,\;\sigma^{d_{i}}(t_{ijlj}')=t_{ijlj}.
		\]
    \endgroup
		Consider
 \begingroup
\setlength{\abovedisplayskip}{4pt}
\setlength{\belowdisplayskip}{7pt}
		\begin{align*}
\sum\limits_{i=1}^{n}\sum\limits_{j=1}^{n_{i}}\sum\limits_{l=1}^{k_{ij}}((\cdots((f_{i}t_{ijl1}')t_{ijl2}')\cdots)t_{ijlj}')X^{u-d_{i}}.
		\end{align*}
    \endgroup
		which is an element of $P$. Observe that
		\begin{align*}
           &\sum\limits_{i=1}^{n}\sum\limits_{j=1}^{n_{i}}\sum\limits_{l=1}^{k_{ij}}((\cdots((f_{i}t_{ijl1}')t_{ijl2}')\cdots)t_{ijlj}')X^{u-d_{i}}\\
			&=\sum\limits_{i=1}^{n}\sum\limits_{j=1}^{n_{i}}\sum\limits_{l=1}^{k_{ij}}((\cdots(((q_{i}X^{d_{i}}+H.T.)t_{ijl1}')t_{ijl2}')\cdots)t_{ijlj}')X^{u-d_{i}}\\
		    &=\sum\limits_{i=1}^{n}\sum\limits_{j=1}^{n_{i}}\sum\limits_{l=1}^{k_{ij}}((\cdots((q_{i}X^{d_{i}}t_{ijl1}'+H.T.)t_{ijl2}')\cdots)t_{ijlj}') X^{u-d_{i}}\\
            &=\sum\limits_{i=1}^{n}\sum\limits_{j=1}^{n_{i}}\sum\limits_{l=1}^{k_{ij}}((\cdots((q_{i}(\sigma^{d_{i}}(t_{ijl1}')X^{d_{i}}+H.T.)t_{ijl2}')\cdots)t_{ijlj}') X^{u-d_{i}}\\
			&=\sum\limits_{i=1}^{n}\sum\limits_{j=1}^{n_{i}}\sum\limits_{l=1}^{k_{ij}}((\cdots((q_{i}t_{ijl1})X^{d_{i}}+H.T.)t_{ijl2}')\cdots)t_{ijlj}') X^{u-d_{i}}\\
			&\hspace{0.2cm}\vdots\\
            &=\sum\limits_{i=1}^{n}\sum\limits_{j=1}^{n_{i}}\sum\limits_{l=1}^{k_{ij}}((\cdots((q_{i}t_{ijl1}))t_{ijl2})\cdots)t_{ijlj})X^{d_{i}}\cdot X^{u-d_{i}}+H.T\\
			&=\sum\limits_{i=1}^{n}\sum\limits_{j=1}^{n_{i}}\sum\limits_{l=1}^{k_{ij}}((\cdots((q_{i}t_{ijl1}))t_{ijl2})\cdots)t_{ijlj})X^{u}+H.T.
		\end{align*}
		The above computation is valid since $X^{d_{i}}\in N_{m}(R[[X;\sigma]])\cap N_{r}(R[[X;\sigma]])$ by Remark \ref{316}.
		Note that $\mathrm{ord}(p)=u=\mathrm{ord}(s_{0})$ and $\mathrm{lc}(p)=v=\mathrm{lc}(s_{0})$. Notice that the difference $p_{1}:=p_{0}-s_{0}$ is either zero or $\mathrm{ord}(p_{0})<\mathrm{ord}(p_{1})$. Repeating this process, we have $p_{2}:=p_{1}-s_{1}, p_{3}:=p_{2}-s_{2},\ldots$ in $P$. This creates a sequence of elements $s_{0},s_{1},s_{2},\ldots$ such that
		\begin{equation*}
			p=(p_{0}-p_{1})+(p_{1}-p_{2})+(p_{2}-p_{3})+\cdots=\sum\limits_{m=0}^{\infty}s_{m}
		\end{equation*}
		where specifically $s_{m}$ is of the form $\sum\limits_{i=1}^{n}\sum\limits_{j=1}^{n_{i}}\sum\limits_{l=1}^{k_{ij}}(\cdots((f_{i}t_{ijl1_{m}}')t_{ijl2_{m}}')\cdots)t_{ijlj_{m}}')X^{\mathrm{ord}(p_{m})-d_{i}}$ for which
		\[\sigma^{d_{i}}(t_{ijl1_{m}}')=t_{ijl1m},\;\;\sigma^{d_{i}}(t_{ijl2_{m}}')=t_{ijl2_{m}},\ldots, \sigma^{d_{i}}(t_{ijlj_{m}}')=t_{ijlj_{m}}\]
		with respect to $p_{m}$. Each is guaranteed to exist since $\sigma^{d_{i}}$ is surjective. Rearranging the sums, we have
		\begin{align*}
			p=\sum\limits_{m=0}^{\infty}s_{m}&=\sum\limits_{m=0}^{\infty}\Bigg(\sum\limits_{i=1}^{n}\sum\limits_{j=1}^{n_{i}}\sum\limits_{l=1}^{k_{ij}}(\cdots((f_{i}t_{ijl1_{m}}')t_{ijl2_{m}}')\cdots)t_{ijlj_{m}}')X^{\mathrm{ord}(p_{m})-d_{i}}\Bigg)\\
			&=\sum\limits_{i=1}^{n}\sum\limits_{j=1}^{n_{i}}\sum\limits_{l=1}^{k_{ij}}\Bigg(\sum\limits_{m=0}^{\infty}(\cdots((f_{i}t_{ijl1_{m}}')t_{ijl2_{m}}')\cdots)t_{ijlj_{m}}')X^{\mathrm{ord}(p_{m})-d_{i}}\Bigg).
		\end{align*}
		
		 If $u<d_{j}$ for some $j\in\mathbb{N}$:  Define
		\[Q_{u}:=\{0\}\cup \{a\in R\;\;|\;\;\text{there exists}\;\;f\in P\;\;\text{such that}\;\;\mathrm{lc}(f)=a\;\;\mathrm{ord}(f)=u\}.\]
		Clearly, this is also a right ideal of $R$ similar to $Q$. Then $v\in Q_{u}$. Since $R$ is right Noetherian, $Q_{u}$ is a finitely generated right ideal so there exist finite generators $b_{u,1},\ldots,b_{u,e}$ in $Q_u$ for some $e\in\mathbb{N}$ such that \[v=\sum\limits_{z=1}^{e}\sum\limits_{j=1}^{e_{z}}\sum\limits_{l=1}^{k_{zj}}(\cdots((b_{u,z}v_{u,zjl1})v_{u,zjl2})\cdots )v_{u,zjlj}\] for some $e_{z},k_{zj}\in\mathbb{N}$,  $v_{u,zjl1},v_{u,zjl2},\ldots,v_{u,zjlj}\in R$. For each $z,j$, since $\sigma$ is surjective, $\sigma^{u}$ is also surjective so there exists $v_{u,zjl1}',v_{u,zjl2}',\ldots v_{u,zjlj}'\in R$ such that \[\sigma^{u}(v_{u,zjl1}')=v_{u,zjl1},\;\;\sigma^{u}(v_{u,zjl2}')=v_{u,zjl2},\ldots, \sigma^{u}(v_{u,zjlj}')=v_{u,zjlj}.\] Choose $g_{u,z}\in P$  such that $\mathrm{lc}( g_{u,z})=b_{u,z}$. Consider the linear combination  \[s_{0}:=\sum\limits_{z=1}^{e}\sum\limits_{j=1}^{e_{z}}\sum\limits_{l=1}^{k_{zj}}(\cdots((g_{u,z}v_{u,zjl1}')v_{u,zjl2}')\cdots)v_{u,zjlj}'.\] which is an element of $P$. Note that
		\begin{align*}
			s_{0}&=\sum\limits_{z=1}^{e}\sum\limits_{j=1}^{e_{z}}\sum\limits_{l=1}^{k_{zj}}(\cdots((g_{u,z}v_{u,zjl1}')v_{u,zjl2}')\cdots)v_{u,zjlj}'\\
			&=\sum\limits_{z=1}^{e}\sum\limits_{j=1}^{e_{z}}\sum\limits_{l=1}^{k_{zj}}(\cdots(((b_{u,z}X^{u}+H.T.)v_{u,zjl1}')v_{u,zjl2}')\cdots)v_{u,zjlj}'\\
			&=\sum\limits_{z=1}^{e}\sum\limits_{j=1}^{e_{z}}\sum\limits_{l=1}^{k_{zj}}(\cdots(((b_{u,z}(X^{u}v_{u,zjl1}')+H.T.))v_{u,zjl2}')\cdots)v_{u,zjlj}'\\
			&=\sum\limits_{z=1}^{e}\sum\limits_{j=1}^{e_{z}}\sum\limits_{l=1}^{k_{zj}}(\cdots(((b_{u,z}(\sigma^{u}(v_{u,zjl1}')X^{u})+H.T.))v_{u,zjl2}')\cdots)v_{u,zjlj}'\\
			&=\sum\limits_{z=1}^{e}\sum\limits_{j=1}^{e_{z}}\sum\limits_{l=1}^{k_{zj}}(\cdots((((b_{u,z}v_{u,zjl1})X^{u}+H.T.))v_{u,zjl2}')\cdots)v_{u,zjlj}'\\
			&\hspace{0.2cm}\vdots\\
			&=\sum\limits_{z=1}^{e}\sum\limits_{j=1}^{e_{z}}\sum\limits_{l=1}^{k_{zj}}(\cdots((((b_{u,z}v_{u,zjl1})v_{u,zjl2})\cdots)v_{u,zjlj})X^{u}+H.T.
		\end{align*}
		The above computation is valid since $X^{u}\in N_{m}(R[[X;\sigma]])\cap N_{r}(R[[X;\sigma]])$ by Remark \ref{316}. Observe that $\mathrm{lc}(p_{0})=\mathrm{lc}(s_{0})$. So, the difference \[p_{1}:=p_{0}-s_{0}\] 
		is either zero or $\mathrm{ord}(p_{0})<\mathrm{ord}(p_{1})$. Repeating this process, we have $p_{2}:=p_{1}-s_{1}, p_{3}:=p_{2}-s_{2},\ldots$ in $P$. This creates a sequence of elements $s_{0},s_{1},s_{2},\ldots$ such that
		\begin{equation*}
			p=(p_{0}-p_{1})+(p_{1}-p_{2})+(p_{2}-p_{3})+\cdots=\sum\limits_{m=0}^{\infty}s_{m}
		\end{equation*}
		where specifically $s_{m}$ is of the form \[\sum\limits_{z=1}^{e}\sum\limits_{j=1}^{e_{z}}\sum\limits_{l=1}^{k_{zj}}(\cdots((g_{u,z}v_{u,zjl1_{m}}')v_{u,zjl2_{m}}')\cdots)v_{u,zjlj_{m}}'X^{\mathrm{ord}(p_{m})-u}\] for which 
		\[
		\sigma^{u}(v_{u,zjl1_{m}}')=v_{u,zjl1m},\;\;\sigma^{u}(v_{u,zjl2_{m}}')=v_{u,zjl2_{m}},\ldots, \sigma^{u}(v_{u,zjlj_{m}}')=v_{u,zjlj_{m}}
		\]
		with respect to $p_{m}$. Each is guaranteed to exist since $\sigma^{u}$ is surjective. Rearranging the sums, we have
		\begin{align*}
			p=\sum\limits_{m=0}^{\infty}s_{m}&=\sum\limits_{m=0}^{\infty}\Bigg(\sum\limits_{z=1}^{e}\sum\limits_{j=1}^{e_{z}}\sum\limits_{l=1}^{k_{zj}}(\cdots((g_{u,z}v_{u,zjl1_{m}}')v_{u,zjl2_{m}}')\cdots)v_{u,zjlj_{m}}'X^{\mathrm{ord}(p_{m})-u}\Bigg)\\
			&=\sum\limits_{z=1}^{e}\sum\limits_{j=1}^{e_{z}}\sum\limits_{l=1}^{k_{zj}}\Bigg(\sum\limits_{m=0}^{\infty}(\cdots((g_{u,z}v_{u,zjl1_{m}}')v_{u,zjl2_{m}}')\cdots)v_{u,zjlj_{m}}'X^{\mathrm{ord}(p_{m})-u}\Bigg).
		\end{align*}
\end{proof}

We are now ready to prove the Hilbert Basis Theorem for right Noetherianity of $R[[X;\sigma]]$.
\vspace{0.1cm}
\begin{thm}\label{314}
   Let $R$ be a unital non-associative ring and $\sigma:R\longrightarrow R$ be an additive map such that $\sigma(1)=1$. If $R$ is a right Noetherian ring that is right quasi-associative and $\sigma$ is a surjective map, then $R[[X; \sigma]]$ is right Noetherian.
\end{thm}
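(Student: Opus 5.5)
The plan is to show that every right ideal $P$ of $R[[X;\sigma]]$ is finitely generated, which suffices by Remark \ref{239}. The point of assuming right quasi-associativity is to avoid the nested products that appear in Lemma \ref{34}. By Proposition \ref{3113}, each $qR$ is a right ideal of $R$. Hence the right ideal generated by $q_1,\ldots,q_n$ is just $q_1R+\cdots+q_nR$: this set is an additive subgroup, it is closed under right multiplication, and it contains every $q_i=q_i1$. So every element of a finitely generated right ideal of $R$ has the form $\sum_i q_i c_i$ with single coefficients $c_i\in R$.

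I would take the leading-coefficient ideal $Q$ and the elements $f_1,\ldots,f_n\in P$ from the proof of Lemma \ref{34}, with $q_i=\mathrm{lc}(f_i)$, $d_i=\mathrm{ord}(f_i)$, $N=\max d_i$ and $Q=q_1R+\cdots+q_nR$. For each $u<N$, I would also take the right ideal $Q_u$ of leading coefficients of elements of $P$ of order exactly $u$. This is a right ideal by the same argument as for $Q$, which uses the surjectivity of $\sigma$. Writing $Q_u=b_{u,1}R+\cdots+b_{u,e_u}R$, choose $g_{u,z}\in P$ of order $u$ with $\mathrm{lc}(g_{u,z})=b_{u,z}$. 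Let $J$ be the right ideal of $R[[X;\sigma]]$ generated by the finite set $\{f_i\}\cup\{g_{u,z}: u<N\}$. Then $J\subseteq P$, and the task is to show $P\subseteq J$.

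Let $p\in P$ be nonzero with $u=\mathrm{ord}(p)<N$. Write $\mathrm{lc}(p)=\sum_z b_{u,z}c_z$ and pick $c_z'$ with $\sigma^u(c_z')=c_z$. Then $p-\sum_z g_{u,z}c_z'\in P$ is either zero or has order strictly greater than $u$. After at most $N$ such steps, $p$ differs by an element of $J$ from some $p'\in P$ that is either zero or has $\mathrm{ord}(p')\ge N$.

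Now suppose $\mathrm{ord}(p_m)=u_m\ge N$. Write $\mathrm{lc}(p_m)=\sum_i q_i c_{i,m}$ and choose $c'_{i,m}$ with $\sigma^{d_i}(c'_{i,m})=c_{i,m}$. Put
\[
p_{m+1}:=p_m-\sum_{i=1}^n f_i\bigl(c'_{i,m}X^{u_m-d_i}\bigr).
\]
Since $X^{k}\in N_r(R[[X;\sigma]])$ by Remark \ref{316}, we have $f_i(c'X^k)=(f_ic')X^k$, whose leading term is $q_ic_{i,m}X^{u_m}$. So the orders of the $p_m$ strictly increase. Set $h_i:=\sum_m c'_{i,m}X^{u_m-d_i}$. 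This is a well-defined power series because the exponents $u_m-d_i$ strictly increase.

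The step I expect to need the most care is justifying $\sum_m f_i\bigl(c'_{i,m}X^{u_m-d_i}\bigr)=f_ih_i$, so that
\[
p'=\sum_{i=1}^n f_ih_i\in J.
\]
I would prove it by checking coefficients. The coefficient of $X^k$ in $f\cdot\sum_m s_m$, where $\mathrm{ord}(s_m)\to\infty$, involves only the finitely many $s_m$ of order $\le k$. Ordinary finite distributivity then gives equality coefficientwise. This needs only bilinearity of the product, not associativity. It also replaces the unstructured infinite sums of nested products in Lemma \ref{34} by a single product $f_ih_i$ for each generator, which is exactly where right quasi-associativity is used. Combining the two stages gives $P=J$, so $P$ is finitely generated and $R[[X;\sigma]]$ is right Noetherian.
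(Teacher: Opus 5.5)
Your proof is correct. It shares the paper's skeleton: the leading-coefficient right ideals $Q$ and $Q_u$, the chosen elements $f_i$ and $g_{u,z}$, and order-raising subtraction. It differs in where right quasi-associativity is used, and in those places it is more careful than the paper.

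You use the hypothesis inside $R$, via Proposition~\ref{3113}, to get $\langle q_1,\ldots,q_n\rangle_r=q_1R+\cdots+q_nR$. So each correction term is a single product $f_i(c'X^k)$, and nested products of power series never appear.

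The paper instead produces, through Lemma~\ref{34}, nested products $(\cdots((f_it'_1)t'_2)\cdots)t'_j$ and then asserts that each equals $f_i\beta'$ with $\beta'\in R$. That would need a single $\beta'$ with $r_k\sigma^k(\beta')=(\cdots(r_k\sigma^k(t'_1))\cdots)\sigma^k(t'_j)$ for every coefficient $r_k$ of $f_i$ at once. Right quasi-associativity of $R$ does not give this, and it can fail even for commutative associative $R$ when $\sigma$ is not multiplicative.

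Your handling of orders $u<N$ is also more careful. You reduce order by order and take all $g_{u,z}$ with $u<N$ as generators. The paper's Lemma~\ref{34} keeps reducing every later remainder $p_m$ with the same $g_{u,z}$, although $\mathrm{lc}(p_m)$ only lies in $Q_{\mathrm{ord}(p_m)}\supseteq Q_u$, and this inclusion can be strict. For example, take $R=\mathbb{Z}$, $\sigma=\mathrm{id}$, $P$ the series with even constant term, $p=2+X$ and $g_{0,1}=2$. The first remainder is $X$, whose leading coefficient is not in $Q_0=2\mathbb{Z}$.

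You also justify $\sum_m f_i(c'_{i,m}X^{u_m-d_i})=f_ih_i$ coefficientwise, which the paper uses without comment.

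What each route buys: the paper's organization confines the quasi-associativity hypothesis to one collapsing step after a lemma that needs no such hypothesis. Your version gives up that separation, but it is shorter and uses the hypothesis only where it is actually available. It also gives an explicit finite generating set $\{f_i\}\cup\{g_{u,z}:u<N\}$.
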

\begin{proof}
	 Let $P$ be a right ideal of $R[[X;\sigma]]$. Let $p\in P$ where $\mathrm{ord}(p)=u$ and $\mathrm{lc}(p)=v$. Define $A:=\{f_{1},\ldots,f_{n}\}$ obtained from the choice process in the part of the proof of Lemma \ref{34}, $d_{i}:=\mathrm{ord}(f_{i})$, and $N:=\max\{d_{i}\}$. Consider the following cases:
	 
	 If $u\geq N$: From Lemma \ref{34}, we have
	 \[ p=\sum\limits_{i=1}^{n}\sum\limits_{j=1}^{n_{i}}\sum\limits_{l=1}^{k_{ij}}\Bigg(\sum\limits_{m=0}^{\infty}(\cdots((f_{i}t_{ijl1_{m}}')t_{ijl2_{m}}')\cdots)t_{ijlj_{m}}')X^{\mathrm{ord}(p_{m})-d_{i}}\Bigg)
	 \]
	 for some $n,n_{i},k_{ij}\in\mathbb{N}$, $f_{i}\in A$, $t'_{ijl1_{m}}, t'_{ijl2_{m}},\ldots,t'_{ijlj_{m}}\in R$, and $p_{m}\in R[[X;\sigma]]$ for $m\geq 1$. Since $R$ is right quasi-associative,
	 \[
	 (\cdots((f_{i}t_{ijl1_{m}}')t_{ijl2_{m}}')\cdots)t_{ijlj_{m}}')X^{\mathrm{ord}(p_{m})-d_{i}}=f_{i}\beta'_{i,m}X^{\mathrm{ord}(p_{m})-d_{i}}
	 \]
	 for some $\beta_{i,m}'\in R$. Particularly, if $R$ is associative, then $\beta'_{i,m}:=t_{ijl1_{m}}'t_{ijl2_{m}}'\cdots t_{ijlj_{m}}'$. 
	 Now,
	 \[
	 p=\sum\limits_{i=1}^{n}\Bigg(\sum\limits_{m=0}^{\infty}f_{i}\beta'_{i,m}X^{\mathrm{ord}(p_{m})-d_{i}}\Bigg)=\sum\limits_{i=1}^{n}f_{i}\Bigg(\sum\limits_{m=0}^{\infty}\beta_{i,m}'X^{\mathrm{ord}(p_{m})-d_{i}} \Bigg).\]
	 In this case, $p$ can be written as a finite linear combination of elements $f_{1},\ldots,f_{n}$.\\
	 
	 If $u<d_{j}$ for some $j\in\mathbb{N}$: From Lemma \ref{34}, we have 
	 \[
	 p=\sum\limits_{z=1}^{e}\sum\limits_{j=1}^{e_{z}}\sum\limits_{l=1}^{k_{zj}}\Bigg(\sum\limits_{m=0}^{\infty}(\cdots((g_{u,z}v_{u,zj1_{m}}')v_{u,zj2_{m}}')\cdots)v_{u,zjj_{m}}'X^{\mathrm{ord}(p_{m})-u}\Bigg)
	 \]
	 for some $e,e_{z},k_{zj}\in\mathbb{N}$,  $g_{u,z}\in P$, $v_{u,zj1_{m}}',v_{u,zj2_{m}}',\ldots,v_{u,zjj_{m}}'\in R$, and $p_{m}\in R[[X;\sigma]]$ for $m\geq 1$.  Since $R$ is right quasi-associative, \[
	 (\cdots((g_{u,z}v_{u,zjl1_{m}}')v_{u,zjl2_{m}}')\cdots)v_{u,zjlj_{m}}'X^{\mathrm{ord}(p_{m})-u}=g_{u,z}\beta'_{u,z_{m}}X^{\mathrm{ord}(p_{m})-u}
	 \]
	 for some $\beta'_{u,z_{m}}\in R$. If $R$ is associative, then $\beta'_{u,z_{m}}:=v_{u,zjl1_{m}}'v_{u,zjl2_{m}}'\cdots \;v_{u,zjlj_{m}}'$.
	 Now, 
	 \[
	 p=\sum\limits_{z=1}^{e}\Bigg( \sum\limits_{m=0}^{\infty}g_{u,z}\beta'_{u,z_{m}}X^{\mathrm{ord}(p_{m})-u}\Bigg)=\sum\limits_{z=1}^{e}g_{u,z}\Bigg(\sum\limits_{m=0}^{\infty}\beta'_{u,z_{m}}X^{\mathrm{ord}(p_{m})-u}\Bigg).
	 \]
	  In this case, $p$ can be written as a finite linear combination of elements $g_{u,1},\ldots,g_{u,e}$.\\
	 
	  By arbitrarines of $p$, $p$ can be written as a finite linear combination using elements $g_{u,1},\ldots,g_{u,e},f_{1},\ldots,f_{n}$. Hence, $P$ is a finitely generated right ideal of $R[[X;\sigma]]$ and so $R[[X;\sigma]]$ is right Noetherian.
\end{proof}

Theorem \ref{314} recovers the Hilbert Basis Theorem for right Noetherianity of skew power series rings from \cite{Goodearl} whenever $R$ is associative, right Noetherian, and $\sigma$ is an automorphism on $R$. Likewise, it generalizes the version of the theorem for non-associative skew power series rings over an associative coefficient ring established in \cite{Back2024}.

%\begin{rem}
%	If $R$ is not right quasi-associative, factoring out $f_{1},\ldots,f_{n}\in P$ and $g_{u,1},\ldots,g_{u,e}\in P$ to the left in the infinite sum from the expression of $p$ in Lemma \ref{34}, is not always possible since these elements are not guaranteed to be in $N_{l}(R[[X;\sigma]])$.
%\end{rem}
\vspace{0.1cm}
We have an example of a non-associative skew power series ring $R[[X;\sigma]]$ that is right Noetherian.
\vspace{0.1cm}
\begin{ex}
	\textnormal{Consider $\mathbb{O}$ and $\mathbb{R}$ as vector spaces over $\mathbb{Q}$. Consider a Hamel Basis $H$ of $\mathbb{R}$ over $\mathbb{Q}$, that is, a set consisting of linearly independent elements where every element of $\mathbb{R}$ can be uniquely expressed as a linear combination using finitely many elements from this set. Let $B=\{b_{0}:=1_{\mathbb{R}},b_{1},b_{2},\ldots,\}$ be a denumerable subset of $H$.
	Define a $\mathbb{Q}$-linear map $\psi:\mathbb{R}\longrightarrow\mathbb{R}$ by $\psi(1_{\mathbb{R}})=1_{\mathbb{R}}$, $\psi(b_{1})=0$, $\psi(b_{n})=b_{n-1}$ for all $n\geq 2$, and $\psi(h)=h$ for all $h\in H\setminus B$. Let $s=\sum\limits_{i=0}^{7}a_{i}e_{i}\in\mathbb{O}$. Define $\sigma:\mathbb{O}\longrightarrow\mathbb{O}$ to be $\sigma(s):=\sum\limits_{i=0}^{7}\psi(a_{i})e_{i}$.
	Here, $\sigma$ is additive since $\psi$ is additive. Also, $\sigma$ is surjective because if we let $u\in\mathbb{R}$, then by definition of $H$, there is some $m,k\in\mathbb{N}$ such that}
	\[
	u=\sum_{\substack{i=0\\ b_{i}\in B}}^{m}\beta_{i}b_{i}+\sum_{\substack{j=0\\ h_{j}\in H\setminus B}}^{k} \gamma_{j}h_{j}
	\]
	\textnormal{for some $\beta_{0},\ldots,\beta_{m}, \gamma_{0},\ldots,\gamma_{k}\in\mathbb{Q}$. Consider the element} \[u'=\beta_{0}b_{0}+\sum_{\substack{i=1\\ b_{i}\in B}}^{m}\beta_{i}b_{i+1}+\sum_{\substack{j=0\\ h_{j}\in H\setminus B}}^{k} \gamma_{j}h_{j}\]
	\textnormal{of $\mathbb{R}$. Since $\psi$ is $\mathbb{Q}$-linear map,}
	\begin{align*}
		\psi(u')&=\psi\left( \beta_{0}b_{0}+\sum_{\substack{i=1\\ b_{i}\in B}}^{m}\beta_{i}b_{i+1}+\sum_{\substack{j=0\\ h_{j}\in H\setminus B}}^{k} \gamma_{j}h_{j}\right)\\
		&=\psi(\beta_{0}b_{0})+\psi\left(  \sum_{\substack{i=1\\ b_{i}\in B}}^{m}\beta_{i}b_{i+1}   \right) +\psi\left(  \sum_{\substack{j=0\\ h_{j}\in H\setminus B}}^{k} \gamma_{j}h_{j}\right)\\
		&=\psi(\beta_{0}b_{0})+\sum_{\substack{i=1\\ b_{i}\in B}}^{m}\psi(\beta_{i}b_{i+1})+ \sum_{\substack{j=0\\ h_{j}\in H\setminus B}}^{k} \psi(\gamma_{j}h_{j})\\
		&=\beta_{0}b_{0}+\sum_{\substack{i=1\\ b_{i}\in B}}^{m}\beta_{i}\psi(b_{i+1})+\sum_{\substack{j=0\\ h_{j}\in H\setminus B}}^{k} \gamma_{j}\psi(h_{j})\\
		&=\sum_{\substack{i=0\\ b_{i}\in B}}^{m}\beta_{i}b_{i}+\sum_{\substack{j=0\\ h_{j}\in H\setminus B}}^{k} \gamma_{j}h_{j}=u.
	\end{align*}
	\textnormal{By definition of $\sigma$, it inherits the surjectivity of $\psi$. Moreover, $\sigma(1_{\mathbb{O}})=1_{\mathbb{O}}$ since $\psi(1_{\mathbb{R}})=1_{\mathbb{R}}$ where $1_{\mathbb{R}}=1_{\mathbb{O}}$ and by definition of $\sigma$. Furthermore, $\mathbb{O}$ is right Noetherian since it is a division algebra so the only right ideals are $\{0\}$ and $\mathbb{O}$ itself which satisfies the ACC on right ideals obviously. From Example \ref{3114}, $\mathbb{O}$ is right quasi-associative. Consider the non-associative skew power series ring $\mathbb{O}[[X;\sigma]]$. By Theorem \ref{314}, $\mathbb{O}[[X;\sigma]]$ is right Noetherian.}
\end{ex}
\vspace{0.1cm}

	To prove the Hilbert Basis Theorem for left Noetherianity of $R[[X;\sigma]]$, we have the following lemma that gives an explicit expression for any nonzero element of a nonzero left ideal of $R[[X;\sigma]]$ whenever $R$ is left Noetherian and $\sigma:R\longrightarrow R$ is a bijective map.
\vspace{0.1cm}
    	\begin{lem}\label{3311}
		Let $R$ be a unital non-associative left Noetherian ring and let $\sigma:R\longrightarrow R$ be an additive bijection map such that $\sigma(1)=1$. Let $P$ be a nonzero left ideal of $R[[X;\sigma]]$ and let a nonzero element $p\in P$ where $u:=\mathrm{ord}(p)$ and $v:=\mathrm{lc}(p)$. Define $p_{0}:=p$. For some $n\in\mathbb{N}$, there exist
		$f_{1},\ldots,f_{n}\in P$ with $d_{i}:=\mathrm{ord}(f_{i})$ and $N:=\max\{d_{1},\ldots,d_{n}\}$ such that if $u\geq N$, then
		\[
		p=\sum\limits_{i=1}^{n}\sum\limits_{j=1}^{n_{i}}\sum\limits_{l=0}^{k_{ij}}\Bigg(\sum\limits_{m=0}^{\infty}X^{\mathrm{ord}(p_{m})-d_{i}}(t_{ijlj_{m}}'(\cdots(t_{ijl2_{m}}'(t_{ijl1_{m}}'f_{i}))\cdots))\Bigg)
		\]
		for some $n_{i}, k_{ij}\in\mathbb{N}$, $t'_{ijl1_{m}},t'_{ijl2_{m}},\ldots,t'_{ijlj_{m}}\in R$, and $p_{m}\in R[[X;\sigma]]$ for $m\geq 1$. If $u<d_{j}$ for some $1\leq j\leq n$, then there exist $g_{u,1},\ldots,g_{u,e}\in P$ for some $e\in\mathbb{N}$ such that
		\[
		p=\sum\limits_{z=1}^{e}\sum\limits_{j=1}^{e_{z}}\sum\limits_{l=0}^{k_{zj}}\Bigg(\sum\limits_{m=0}^{\infty}X^{\mathrm{ord}(p_{m})-u}(v_{u,zjlj_{m}}'(\cdots(v_{u,zjl2_{m}}'(v_{u,zjl1_{m}}'g_{u,z}))\cdots))\Bigg)
		\]
		for some $e_{z},k_{zj}\in\mathbb{N}$, $v_{u,zjl1_{m}}',v_{u,zjl2_{m}}',\ldots,v_{u,zjlj_{m}}'\in R$, and $p_{m}\in R[[X;\sigma]]$ for $m\geq 1$.
	\end{lem}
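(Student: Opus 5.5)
The proof follows the proof of Lemma \ref{34} with left multiplication in place of right multiplication. There is one new feature: a power of $X$ placed on the \emph{left} now twists the coefficients, since $X^{k}(aX^{d})=\sigma^{k}(a)X^{k+d}$. In Lemma \ref{34} it was multiplication by ring elements that had to be untwisted, which needed only surjectivity of $\sigma$. Here the twist comes from the shifting power $X^{k}$, and undoing it needs $\sigma^{-1}$; this is where the bijectivity hypothesis is used.

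\textbf{Step 1 (the leading-coefficient ideals).} For each $k\geq 0$ define
\[
Q_{k}:=\{0\}\cup\{\mathrm{lc}(f)\;|\;f\in P\setminus\{0\},\ \mathrm{ord}(f)=k\}.
\]
I will check that each $Q_{k}$ is a left ideal of $R$.
\begin{itemize}
\item Left absorption is easier than in Lemma \ref{34}: for $r\in R$ and $f=qX^{k}+H.T.$ we get $rf=(rq)X^{k}+H.T.$, and no preimage under $\sigma$ is needed.
\item Closure under subtraction within a fixed order $k$ is immediate.
\end{itemize}
To compare different orders, left multiplication by $X^{s}$ sends $Q_{k}$ into $Q_{k+s}$ via $\sigma^{s}$. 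I will use the normalised sets $\sigma^{-k}(Q_{k})$ and the ideals they generate, $\langle\sigma^{-k}(Q_{k})\rangle_{l}$. These form an ascending chain: if $a\in Q_{k}$ then $\sigma(a)=\mathrm{lc}(Xf)\in Q_{k+1}$, so $\sigma^{-k}(Q_{k})\subseteq\sigma^{-(k+1)}(Q_{k+1})$.

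\textbf{Step 2 (choosing the generators).} I run the same inductive choice as in Lemma \ref{34}. Take $f_{i}$ of least order whose normalised leading coefficient $\sigma^{-d_{i}}(\mathrm{lc}(f_{i}))$ is not in the left ideal generated by the previously chosen normalised coefficients. Left Noetherianity of $R$ stops the process after finitely many steps, producing $f_{1},\ldots,f_{n}$ and $N=\max\{d_{i}\}$.

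For each $u<N$, left Noetherianity also makes $Q_{u}$ finitely generated, say by $b_{u,1},\ldots,b_{u,e}$. I pick $g_{u,z}\in P$ with $\mathrm{lc}(g_{u,z})=b_{u,z}$ and $\mathrm{ord}(g_{u,z})=u$.

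\textbf{Step 3 (one step of the reduction, case $u\geq N$).} Write $\sigma^{-u}(v)$ as a sum of nested left products
\[
t'_{ijlj}\bigl(\cdots(t'_{ijl1}\,\sigma^{-d_{i}}(q_{i}))\cdots\bigr).
\]
I then want
\[
s_{0}=\sum_{i,j,l}X^{u-d_{i}}\bigl(t_{ijlj}(\cdots(t_{ijl1}f_{i})\cdots)\bigr)
\]
to have leading term $vX^{u}$.

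This is the step I expect to be the main obstacle. The leading coefficient of the $i$-th summand is $\sigma^{u-d_{i}}$ applied to a nested product containing $q_{i}$, and $\sigma$ is only additive, not multiplicative. So $\sigma^{u-d_{i}}$ cannot be pushed through the product. I would try to handle this with additivity alone: choose the coefficients $t_{ijl\cdot}$ in the nested product as $\sigma^{d_{i}-u}$-preimages, using bijectivity, of the data needed at order $u$. Then check that $\sigma^{u-d_{i}}$ of each summand reproduces exactly the corresponding summand of $v$. If that fails in general, the fallback is to keep the un-normalised sets $Q_{k}$ and argue with all orders up to $N$ separately, as in the second case.

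\textbf{Step 4 (case $u<d_{j}$, and passing to the limit).} When $u<d_{j}$, the same step is done inside $Q_{u}$ with the $g_{u,z}$. No twist occurs there, because the shift exponent $\mathrm{ord}(p_{m})-u$ is $0$ at the first step.

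In either case the difference $p_{1}=p_{0}-s_{0}$ is zero or has strictly larger order. Iterating gives $p_{m}$ and $s_{m}$ with $p=\sum_{m}s_{m}$. This sum makes sense formally because the orders go to infinity, and rearranging the finite outer sums yields the displayed expressions. Remark \ref{316} ($X^{k}\in N_{m}\cap N_{r}$) is used to move powers of $X$ past the nested products when computing leading terms, exactly as in the computation in Lemma \ref{34}.
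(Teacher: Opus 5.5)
\textbf{The gap is your Step 3, and it cannot be filled under the stated hypotheses.} Take the $f_i$ fixed in advance, as Theorem~\ref{3110} needs, and let the $p_m$ be the successive remainders.

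Only the $m=0$ terms contribute to the $X^{u}$-coefficient of the first displayed sum. The term $X^{u-d_i}\bigl(t'_{j}(\cdots(t'_{1}f_i)\cdots)\bigr)$ contributes $\sigma^{u-d_i}\bigl(t'_{j}(\cdots(t'_{1}\,\mathrm{lc}(f_i))\cdots)\bigr)$. So the leading coefficients you can reach at order $u$ are exactly the elements of $\sum_i\sigma^{u-d_i}\bigl(\langle\mathrm{lc}(f_i)\rangle_l\bigr)$.

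Bijectivity only lets you choose the argument of $\sigma^{u-d_i}$ inside $\langle\mathrm{lc}(f_i)\rangle_l$. It cannot make this set a left ideal, nor make it contain $Q_u$. So taking preimages of the individual factors does not help. Your un-normalised fallback also says nothing about the infinitely many orders $u\ge N$.

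Here is a concrete counterexample. Let $K$ be a field and $R=K[s,t]$. Let $\sigma$ be the $K$-linear bijection permuting monomials that fixes $1$ and every $s^at^b$ with $a\ge2$, and sends $t\mapsto s$, $t^b\mapsto t^{b-1}$ $(b\ge2)$, and $st^b\mapsto st^{b+1}$ $(b\ge0)$.
\begin{itemize}
\item Then $\sigma(1)=1$ and $\sigma(sR)=s^2R+stR\subsetneq sR$.
\item Hence $P=\{\sum_k c_kX^k : c_k\in sR\}$ is a left ideal of $R[[X;\sigma]]$, with $Q_u=sR$ for all $u$.
\item For any $f_1,\dots,f_n\in P$ and $p=sX^{N+1}$, the reachable set lies in $\sigma(sR)$, and $s\notin\sigma(sR)$.
\end{itemize}

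More is true. Let $\phi(c)$ be the coefficient of the monomial $s$ in $c$, and $\epsilon(c)$ its constant term. For $l\ge1$ we have $\sigma^l(sR)\subseteq s^2R+stR$, and $\phi$ vanishes on that ideal. Therefore $\phi\bigl((hg)_k\bigr)=\epsilon(h_0)\,\phi(g_k)$ for all $h\in R[[X;\sigma]]$ and $g\in P$, where $h_0$ is the constant coefficient of $h$.

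Consequently, for every $x$ in the left ideal generated by $g_1,\dots,g_r\in P$, the sequence $(\phi(x_k))_k$ lies in the $K$-span of the $r$ sequences $(\phi((g_i)_k))_k$. But the elements $sX^k\in P$ give all unit sequences. So $P$ is not finitely generated, and $R[[X;\sigma]]$ is not left Noetherian, although $R$ is commutative Noetherian. Since $R$ is associative, the failure comes purely from $\sigma$ not being multiplicative. Theorem~\ref{3110} itself therefore fails for a merely additive bijection $\sigma$.

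\textbf{Comparison with the paper.} The paper takes the same normalised route: right-hand coefficients $aX^i=X^i\sigma^{-i}(a)$ and $Q=\{\sigma^{-\mathrm{ord}(f)}(\mathrm{lc}(f))\}$. It gets past your obstacle only through two rewritings. In Step 3 it rewrites $t'_{ijl1}X^{d_i}q_i$ as $X^{d_i}(t_{ijl1}q_i)$. When showing that $Q$ is a left ideal it rewrites $(X^{d}r)q$ as $X^{d}(rq)$.

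Both amount to $(X^{d},t,q)=0$, that is, $X^{d}\in N_l$, equivalently $\sigma^{d}(tq)=\sigma^{d}(t)\sigma^{d}(q)$. Remark~\ref{316} only gives $X^d\in N_m\cap N_r$, so your caution was justified. If $\sigma$ is also multiplicative, the first case does go through your way: the sets $\sigma^{-k}(Q_k)$ are then left ideals, and the choice $t'=\sigma^{d_i}(t)$ gives leading coefficient $v$.

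Your Step 4 has a second, independent flaw, which the paper's second case shares. For $m\ge1$ the exponent $\mathrm{ord}(p_m)-u$ is positive, so the twist returns. More seriously, $\mathrm{lc}(p_m)$ lies in $Q_{\mathrm{ord}(p_m)}$, which the $g_{u,z}$ need not reach.

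For example, take $R=\mathbb{Z}$, $\sigma=\mathrm{id}$, $P=\mathbb{Z}[[X]]\,2+\mathbb{Z}[[X]]\,X$ and $g_{0,1}=2$. Then $p=2+X$ is not in the left ideal generated by $g_{0,1}$. The remainders must instead be handled in turn by generators of orders $u+1,\dots,N-1$, and then by the $f_i$.
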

	
	\begin{proof}
		Let $R$ be a non-associative left Noetherian ring and $\sigma:R\longrightarrow R$ be an additive bijection map such that $\sigma(1)=(1)$. Observe that for any $d=\sum\limits_{i=0}^{\infty}a_{i}X^{i}\in R[[X;\sigma]]$, it can be written on right-hand coefficients by defining $a_{i}X^{i}:=X^{i}\sigma^{-i}(a_{i})$ in $R[[X;\sigma]]$. This definition is valid since $\sigma$ is bijective and so as $\sigma^{i}$, implying the existence of $\sigma^{-i}$ for every $i$. So
		\begin{align*}
			d=\sum\limits_{i=0}^{\infty}a_{i}X^{i}&=a_{0}+X\sigma^{-1}(a_{1})+X^{2}\sigma^{-2}(a_{2})+\cdots\\
			&=\sum\limits_{i=0}^{\infty}X^{i}\sigma^{-i}(a_{i})
		\end{align*}
		Each $\sigma^{-i}(a_{i})$ are the coefficients corresponding to $a_{i}$ and $\sigma^{\mathrm{-ord}(d)}(\mathrm{lc}(d))$ is the coefficient corresponding to $\mathrm{lc}(d)$. Also, for any $X^{i}\sigma^{-i}(a_{i})$, $X^{j}\sigma^{-j}(b_{j})\in R[[X;\sigma]]$, the product is
		\begin{align*}
			(X^{i}\sigma^{-i}(a_{i}))(X^{j}\sigma^{-j}(b_{j}))&=(\sigma^{i}(\sigma^{-i}(a_{i})X^{i})(\sigma^{j}(\sigma^{-j}(b_{j}))X^{j})\\
			&=(a_{i}X^{i})(b_{j}X^{j})\\
			&=a_{i}\sigma^{i}(b_{j})X^{i+j}\\
			&=X^{i+j}\sigma^{-(i+j)}(a_{i}\sigma^{i}(b_{j}))
		\end{align*}
		and extend bilinearly to general elements of $R[[X;\sigma]]$. In obtaining this product, we did not apply associativity since by definition, $X$ is not necessarily in $N_{l}(R[[X;\sigma]])$ because $\sigma$ is not necessarily multiplicative.
		
		Let $P$ be a nonzero left ideal of $R[[X;\sigma]]$. Define the set $Q$,
		\[
			Q:=\{0\}\cup\{\sigma^{\mathrm{-ord}(f)}(\mathrm{lc}(f))\}\;|\;f\in P\setminus\{0\}\}.
		\]
		\textbf{Claim}: $Q$ is a left ideal of $R$.\\
		We will show that $Q$ is an additive subgroup of $R$. Let $a,b\in Q$. Then there exists $f,g\in P$ such that $a=\sigma^{\mathrm{-ord(f)}}(\mathrm{lc}(f))$ and $b=\sigma^{\mathrm{-ord(g)}}(\mathrm{lc}(g))$. Suppose $\mathrm{ord}(f)=\mathrm{ord}(g)$. In this case, define $b:=\mathrm{ord}(f)=\mathrm{ord}(g)$. Then
		\begin{align*}
			a-b&=(\sigma^{-b}(\mathrm{lc}(f)))-(\sigma^{\mathrm{-b}}(\mathrm{lc}(g)))\\
			&=\sigma^{-b}(\mathrm{lc}(f)-\mathrm{lc}(g))\\
			&=\sigma^{-b}(\mathrm{lc}(f-g))\in Q
		\end{align*}
		with $f-g\in P$. On the other hand, suppose that $\mathrm{ord}(f)\neq\mathrm{ord}(g)$. WLOG, suppose $\mathrm{ord}(f)<\mathrm{ord}(g)$. Consider the element $g_{1}:=X^{\mathrm{ord}(f)-\mathrm{ord}(g)}g$. Obviously, $g_{1}\in P$. Observe that
		\begin{align*}
			g_{1}=X^{\mathrm{ord}(f)-\mathrm{ord}(g)}g&=X^{\mathrm{ord}(f)-\mathrm{ord}(g)}(X^{\mathrm{ord}(g)}b+H.T)\\
			&=X^{\mathrm{ord}(f)}b+H.T.
		\end{align*}
		This makes $\mathrm{ord}(g_{1})=\mathrm{ord}(f)$. Also, $\mathrm{lc}(g)=\mathrm{lc}(g_{1})$, i.e., $g$ and $g_{1}$ represent the same $b$. So, we can just assume WLOG that $\mathrm{ord}(g)=\mathrm{ord}(f)$. Denote this order by $z$. Now, for this case, we have $a=\sigma^{-z}(\mathrm{lc}(f))$ and $b=\sigma^{-z}(\mathrm{lc}(g))$. Then
		\begin{align*}
			f-g&=(X^{z}a+H.T.)-(X^{z}b+H.T.)\\
			&=X^{z}(a-b)+H.T.\\
			&=X^{z}(\sigma^{-z}(\mathrm{lc}(f))-\sigma^{-z}(\mathrm{lc}(g)))+H.T.\\
			&=X^{z}\sigma^{-z}(\mathrm{lc}(f)-\mathrm{lc}(g))+H.T.\\
			&=X^{z}\sigma^{-z}(\mathrm{lc}(f-g))+H.T.
		\end{align*}
		with $f-g\in P$. Hence, $a-b=\sigma^{-z}(\mathrm{lc}(f-g))\in Q$.

		 Now, let $q\in Q$ and let $r\in R$. If $q\in Q$, then there exists $h\in P$ such that $h=\mathrm{lc}(h)X^{\mathrm{ord}(h)}+H.T.$ where $q=\sigma^{-\mathrm{ord}(h)}(\mathrm{lc}(h))$. Written with right-hand coefficients, we have $h=X^{\mathrm{ord}(h)}q+H.T.$. Consider $\sigma^{\mathrm{ord}(h)}(r)h$ which is an element of $P$. Then
		\begin{align*}
			\sigma^{\mathrm{ord}(h)}(r)h&=\sigma^{\mathrm{ord}(h)}(r)(X^{\mathrm{ord}(h)}q+H.T.)\\
			&=(\sigma^{\mathrm{ord}(h)}(r)X^{\mathrm{ord}(h)})q+H.T.\\
			&=X^{\mathrm{ord}(h)}\sigma^{-\mathrm{ord}(h)}(\sigma^{\mathrm{ord}(h)}(r))q+H.T.\\
			&=X^{\mathrm{ord}(h)}rq +H.T..
		\end{align*}
		where $rq=\sigma^{-\mathrm{ord}(h)}(\mathrm{lc}(\sigma^{\mathrm{ord}(h)}(r)h))\in Q$. So $Q$ is a left ideal of $R$.
		
		Now, let $f_{1}\in P$ be a nonzero element such that $f_{1}$ has the least order among elements of $P$ and define $q_{1}:=\sigma^{-\mathrm{ord}(f_{1})}(\mathrm{lc}(f_{1}))$. Next, choose $f_{2}\in P$ such that $f_{2}$ has the least order among all the elements of $P$ whose $\sigma^{-\mathrm{ord}(f_{2})}(\mathrm{lc}(f_{2}))$ is not in the left ideal generated by $q_{1}$ and set $q_{2}:=\sigma^{-\mathrm{ord}(f_{2})}(\mathrm{lc}(f_{2}))$. Continue the choice inductively, i.e., choose $f_{i}\in P$ such that $f_{i}$ has the least order among all the elements of $P$ whose $\sigma^{-\mathrm{ord}(f_{i})}(\mathrm{lc}(f_{i}))$ is not in the left ideal generated by $\{q_{1},\ldots,q_{i-1}\}$ and set $q_{i}:=\sigma^{-\mathrm{ord}(f_{i})}(\mathrm{lc}(f_{i}))$. We can choose such $f_{i}\in P$ since the set consisting of the orders of the elements of $P$ is a subset of $\mathbb{N}$ so it satisfies the Well-Ordering Principle (WOP). This process creates an ascending chain of left ideals,
		\begin{equation*}
			I_{1}=\langle\{q_{1}\}\rangle_{l}\subset I_{2}=\langle\{q_{1},q_{2}\}\rangle_{l}\subset\cdots\subset I_{i}=\langle\{q_{1},q_{2},\ldots,q_{i}\}\rangle_{l}\subset\cdots
		\end{equation*}
		Note that $Q$ consists of all images of leading coefficients of elements of $P$ under the map $\sigma^{\mathrm{-ord}(f)}$ so it is a part of the chain. Since $R$ is left Noetherian, it satisfies ACC on left ideals, and the chain stabilizes, i.e., there exists $n\in\mathbb{N}$ such that for all $i\geq n$, $I_{n}=I_{i}$. So, the choice process on $f_{i}$ must stop and so $Q=I_{n}$ for some $n\in\mathbb{N}$. Since $R$ is non-associative, every element of $Q$ is of the form
		\[\sum\limits_{i=1}^{n}\sum\limits_{j=1}^{n_{i}}\sum\limits_{l=1}^{k_{ij}}t_{ijlj} ( \cdots ( t_{ijl2} ( t_{ijl1} q_i ) ) \cdots ).\] 
		for some $n_{i},k_{ij}\in\mathbb{N}$, $t_{ijl1}, t_{ijl2} \ldots, t_{ijlj}\in R$.  Let $p\in P$ where $u:=\mathrm{ord}(p)$ and $v:=\mathrm{lc}(p)$. The set $\{q_{1},\ldots,q_{n}\}$ has a corresponding set $\{f_{1},\ldots,f_{n}\}\subseteq P$ from the choice process earlier. Define $d_{i}:=\mathrm{ord}(f_{i})$ and $N:=\max\{d_{1},\ldots, d_{n}\}$.  Consider the following cases:
		
			If $u\geq N$: By definition of $Q$, there exists $w\in Q$ such that $w=\sigma^{-u}(v)$. Since $R$ is left Noetherian, $Q$ is finitely generated as a left ideal of $R$ so
			
			\[w=\sum\limits_{i=1}^{n}\sum\limits_{j=1}^{n_{i}}\sum\limits_{l=1}^{k_{ij}}t_{ijlj} ( \cdots ( t_{ijl2} ( t_{ijl1} q_i ) ) \cdots ).
			\]
			This means
			\begin{align*}
				p=X^{u}w+H.T.&=X^{u}\Bigg(\sum\limits_{i=1}^{n}\sum\limits_{j=1}^{n_{i}}\sum\limits_{l=1}^{k_{ij}}t_{ijlj} ( \cdots ( t_{ijl2} ( t_{ijl1} q_i ) ) \cdots )\Bigg)+H.T.\\
				&=\sum_{i=1}^{n}\sum_{j=1}^{n_{i}}\sum\limits_{l=1}^{k_{ij}}X^{u}(t_{ijlj}(\cdots(t_{ijl1}q_{i})\cdots))+H.T.
			\end{align*}
			For each $i,j$, define
		\[
		t_{ijl1}':=\sigma^{d_{i}}(t_{ijl1}),\;t_{ijl2}':=\sigma^{d_{i}}(t_{ijl2})\ldots,\; t_{ijlj}':=\sigma^{d_{i}}(t_{ijlj})
		\]
			then consider
			\[
			s_{0}:=\sum\limits_{i=1}^{n}\sum_{j=1}^{n_{i}}\sum\limits_{l=1}^{k_{ij}}X^{u-d_{i}}(t_{ijlj}'(\cdots(t_{ijl2}'(t_{ijl1}'f_{i}))\cdots))
			\]
			which is an element of $P$. Observe that
			\begin{align*}
				&\sum\limits_{i=1}^{n}\sum_{j=1}^{n_{i}}\sum\limits_{l=1}^{k_{ij}}X^{u-d_{i}}(t_{ijlj}'(\cdots(t_{ijl2}'(t_{ijl1}'f_{i}))\cdots))\hspace{-0.2cm}\\
				&=\sum\limits_{i=1}^{n}\sum_{j=1}^{n_{i}}\sum\limits_{l=1}^{k_{ij}}X^{u-d_{i}}(t_{ijlj}'(\cdots(t_{ijl2}'(t_{ijl1}'(X^{d_{i}}q_{i}+H.T.)))\cdots))\\
				&=\sum\limits_{i=1}^{n}\sum_{j=1}^{n_{i}}\sum\limits_{l=1}^{k_{ij}}X^{u-d_{i}}(t_{ijlj}'(\cdots(t_{ijl2}'(t_{ijl1}'X^{d_{i}}q_{i})\cdots))+H.T.\\
				&=\sum\limits_{i=1}^{n}\sum_{j=1}^{n_{i}}\sum\limits_{l=1}^{k_{ij}}X^{u-d_{i}}(t_{ijlj}'(\cdots(t_{ijl2}'(X^{d_{i}}(t_{ijl1}q_{i})))\cdots))+H.T.\\
				&\hspace{0.2cm}\vdots\\
				&=\sum\limits_{i=1}^{n}\sum_{j=1}^{n_{i}}\sum\limits_{l=1}^{k_{ij}}X^{u-d_{i}}\cdot X^{d_{i}}(t_{ijlj}(\cdots(t_{ijl1}q_{i})\cdots))+H.T.\\
				&=\sum\limits_{i=1}^{n}\sum_{j=1}^{n_{i}}\sum\limits_{l=1}^{k_{ij}}X^{u}(t_{ijlj}(\cdots(t_{ijl1}q_{i})\cdots))+H.T..
			\end{align*}
			The above computation is valid since $X^{d_{i}}\in N_{m}(R[[X;\sigma]])\cap N_{r}(R[[X;\sigma]])$ by Remark \ref{316}.
			Observe that $\sigma^{-u}(\mathrm{lc}(p))=\sigma^{-u}(\mathrm{lc}(s_{0}))$ implying that $\mathrm{lc}(p)=\mathrm{lc}(s_{0})$. Also, $\mathrm{ord}(p)=u=\mathrm{ord}(s_{0})$. Define $p_{0}:=p$. Notice that the difference $p_{1}:=p_{0}-s_{0}$ is either zero or $\mathrm{ord}(p_{0})<\mathrm{ord}(p_{1})$. Repeating this process, we have $p_{2}:=p_{1}-s_{1}, p_{3}:=p_{2}-s_{2},\ldots$ in $P$. This creates a sequence of elements $s_{0},s_{1},s_{2},\ldots$ such that
			\begin{equation*}
				p=(p_{0}-p_{1})+(p_{1}-p_{2})+(p_{2}-p_{3})+\cdots=\sum\limits_{m=0}^{\infty}s_{m}
			\end{equation*}
			where specifically $s_{m}$ is of the form
			\[
			\sum\limits_{i=1}^{n}\sum\limits_{j=1}^{n_{i}}\sum\limits_{l=1}^{k_{ij}}X^{\mathrm{ord(p_{m})}-d_{i}}(t_{ijlj}'(\cdots(t_{ijl2}'(t_{ijl1}'f_{i}))\cdots))
			\]
			for which 
			\[
			t_{ijl1_{m}}':=\sigma^{d_{i}}(t_{ijl1_{m}}),\;\;t_{ijl2_{m}}':=\sigma^{d_{i}}(t_{ijl2_{m}}),\ldots, t_{ijlj_{m}}':=\sigma^{d_{i}}(t_{ijlj_{m}})
			\]
			with respect to $p_{m}$.
		  Rearranging the sums, we have
			\begin{align*}
				p=\sum\limits_{m=0}^{\infty}s_{m}&=\sum\limits_{m=0}^{\infty}\Bigg(\sum\limits_{i=1}^{n}\sum\limits_{j=1}^{n_{i}}\sum\limits_{l=1}^{k_{ij}}X^{\mathrm{ord}(p_{m})-d_{i}}(t_{ijlj_{m}}'(\cdots(t_{ijl2_{m}}'(t_{ijl1_{m}}'f_{i}))\cdots))\Bigg)\\
				&=\sum\limits_{i=1}^{n}\sum\limits_{j=1}^{n_{i}}\sum\limits_{l=1}^{k_{ij}}\Bigg(\sum\limits_{m=0}^{\infty}X^{\mathrm{ord}(p_{m})-d_{i}}(t_{ijlj_{m}}'(\cdots(t_{ijl2_{m}}'(t_{ijl1_{m}}'f_{i}))\cdots))\Bigg).
			\end{align*}
			
			If $u<d_{j}$ for some $j\in\mathbb{N}$: Define
			\[Q_{u}:=\{0\}\cup \{a\in R\;\;|\;\;\text{there exists}\;f\in P\;\text{such that}\;a=\sigma^{-u}(\mathrm{lc}(f)),\;\mathrm{ord}(f)=u\}.\]
			Clearly, this is also a left ideal of $R$ similar to $Q$. Since $p\in P$ where $\mathrm{lc}(p)=v$ and $\mathrm{ord}(p)=u$, $\sigma^{-u}(v)\in Q_{u}$. Since $R$ is left Noetherian, there exist finite generators $b_{u,1},\ldots,b_{u,e}$ in $Q_{u}$ for some $e\in\mathbb{N}$ such that \[\sigma^{-u}(v)=\sum\limits_{z=1}^{e}\sum\limits_{j=1}^{e_{z}}\sum\limits_{l=1}^{k_{zj}}v_{u,zjlj}(\cdots (v_{u,zjl2}(v_{u,zjl1}b_{u,z}))\cdots)\] for some $e_{z},k_{zj}\in\mathbb{N}$, $v_{u,zjl1},v_{u,zjl2},\ldots,v_{u,zjlj}\in R$. For each $z$, choose $g_{u,z}\in P$ where $\mathrm{ord}(g_{u,z})=u$ such that $\sigma^{-u}(\mathrm{lc}(g_{u,z}))=b_{u,z}$. Moreover, for each $j$, define $v_{u,zjlj}':=\sigma^{u}(v_{u,zjlj})$ so that $\sigma^{-u}(v_{u,zjlj}')=v_{u,zjlj}$. Consider the linear combination
			\[
			s_{0}:=\sum\limits_{z=1}^{e}\sum\limits _{j=1}^{e_{z}}\sum\limits_{l=1}^{k_{zj}}v_{u,zjlj}'(\cdots(v_{u,zjl2}'(v_{u,zjl1}'g_{u,z}))\cdots).
			\]
			which is an element of $P$. Then
			\begin{align*}
				&\sum\limits_{z=1}^{e}\sum\limits _{j=1}^{e_{z}}\sum\limits_{l=1}^{k_{zj}}v_{u,zjlj}'(\cdots(v_{u,zjl2}'(v_{u,zjl1}'g_{u,z}))\cdots)\\    &=\sum\limits_{z=1}^{e}\sum\limits _{j=1}^{e_{z}}\sum\limits_{l=1}^{k_{zj}}v_{u,zjlj}'(\cdots(v_{u,zjl2}'(v_{u,zjl1}'(X^{u}b_{u,z}+H.T.)))\cdots)\\
				&=\sum\limits_{z=1}^{e}\sum\limits _{j=1}^{e_{z}}\sum\limits_{l=1}^{k_{zj}}v_{u,zjlj}'(\cdots(v_{u,zjl2}'(v_{u,zjl1}'X^{u})b_{u,z})))\cdots)+H.T.\\
				&=\sum\limits_{z=1}^{e}\sum\limits _{j=1}^{e_{z}}\sum\limits_{l=1}^{k_{zj}}v_{u,zjlj}'(\cdots(v_{u,zjl2}'(X^{u}\sigma^{-u}(v_{u,zjl1}'))b_{u,z})))\cdots)+H.T.\\
				&=\sum\limits_{z=1}^{e}\sum\limits _{j=1}^{e_{z}}\sum\limits_{l=1}^{k_{zj}}v_{u,zjlj}'(\cdots(v_{u,zjl2}'(X^{u}v_{u,zjl1})b_{u,z})))\cdots)+H.T.\\
				&\hspace{0.2cm}\vdots\\
				&=\sum\limits_{z=1}^{e}\sum\limits _{j=1}^{e_{z}}\sum\limits_{l=1}^{k_{zj}}X^{u}(v_{u,zjlj}(\cdots(v_{u,zjl2}(v_{u,zjl1}b_{u,z})))\cdots))+H.T..
			\end{align*}
			The above computation is valid since $X^{u}\in N_{m}(R[[X;\sigma]])\cap N_{r}(R[[X;\sigma]])$ by Remark \ref{316}. Define $p_{0}:=p$. Note that $p=X^{u}\sigma^{-u}(v)+H.T.$ and $\sigma^{-u}(\mathrm{lc}(p))=\sigma^{-u}(\mathrm{lc}(s_{0}))$. This implies $\mathrm{lc}(p_{0})=\mathrm{lc}(s_{0})$. So, the difference \[p_{1}:=p_{0}-s_{0}\] 
			is either zero or $\mathrm{ord}(p_{0})<\mathrm{ord}(p_{1})$. Repeating this process, we have $p_{2}:=p_{1}-s_{1}, p_{3}:=p_{2}-s_{2},\ldots$ in $P$. This creates a sequence of elements $s_{0},s_{1},s_{2},\ldots$ such that
			\begin{equation*}
				p=(p_{0}-p_{1})+(p_{1}-p_{2})+(p_{2}-p_{3})+\cdots=\sum\limits_{m=0}^{\infty}s_{m}
			\end{equation*}
			where specifically $s_{m}$ is of the form \[\sum\limits_{z=1}^{e}\sum\limits_{j=1}^{e_{z}}\sum\limits_{l=1}^{k_{zj}}X^{\mathrm{ord}(p_{m})-u}(v_{u,zjlj_{m}}'(\cdots(v_{u,zjl2_{m}}'(v_{u,zjl1_{m}}'g_{u,z}))\cdots))\]
			for which 
		\[
		v_{u,zjl1_{m}}':=\sigma^{u}(v_{u,zjl1m}),\;\;v_{u,zjl2_{m}}':=\sigma^{u}(v_{u,zjl2_{m}}),\ldots, v_{u,zjlj_{m}}':=\sigma^{u}(v_{u,zjlj_{m}})
		\]
		with respect to $p_{m}$. Rearranging the sums, we have
			\begin{align*}
				p&=\sum\limits_{m=0}^{\infty}s_{m}\\
				&=\sum\limits_{m=0}^{\infty}\Bigg(\sum\limits_{z=1}^{e}\sum\limits_{j=1}^{e_{z}}\sum\limits_{l=1}^{k_{zj}}X^{\mathrm{ord}(p_{m})-u}(v_{u,zjlj_{m}}'(\cdots(v_{u,zjl2_{m}}'(v_{u,zjl1_{m}}'g_{u,z}))\cdots))\Bigg)\\
				&=\sum\limits_{z=1}^{e}\sum\limits_{j=1}^{e_{z}}\sum\limits_{l=1}^{k_{zj}}\Bigg(\sum\limits_{m=0}^{\infty}X^{\mathrm{ord}(p_{m})-u}(v_{u,zjlj_{m}}'(\cdots(v_{u,zjl2_{m}}'(v_{u,zjl1_{m}}'g_{u,z}))\cdots))\Bigg).
			\end{align*}
	\end{proof}
    
	We present the Hilbert Basis Theorem for left Noetherianity of the non-associative skew power series ring $R[[X;\sigma]]$.
    \vspace{0.1cm}

    	\begin{thm}\label{3110}
		  Let $R$ be a unital non-associative ring and $\sigma:R\longrightarrow R$ be an additive map such that $\sigma(1)=1$. If $R$ is a left Noetherian ring that is left quasi-associative and $\sigma$ is a bijective map, then $R[[X;\sigma]]$ is left Noetherian.
	\end{thm}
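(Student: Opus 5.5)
We mirror the proof of Theorem \ref{314}, with Lemma \ref{3311} in place of Lemma \ref{34} and left quasi-associativity in place of right quasi-associativity.

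Let $P$ be a nonzero left ideal of $R[[X;\sigma]]$ and let $p\in P$ be nonzero with $u=\mathrm{ord}(p)$. Let $A=\{f_{1},\ldots,f_{n}\}\subseteq P$ be the elements produced by the choice process in the proof of Lemma \ref{3311}, with $d_{i}=\mathrm{ord}(f_{i})$ and $N=\max\{d_{i}\}$. For each $u<N$, fix the finitely many elements $g_{u,1},\ldots,g_{u,e_{u}}$ chosen for $Q_{u}$. There are only finitely many such $u$, so the set
\[
G:=A\cup\bigcup_{u<N}\{g_{u,1},\ldots,g_{u,e_{u}}\}
\]
is finite. We will show that $G$ generates $P$ as a left ideal; by Remark \ref{239} this gives that $R[[X;\sigma]]$ is left Noetherian.

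\textbf{Case $u\geq N$.} Lemma \ref{3311} expresses $p$ as a finite sum over $i,j,l$ of infinite series
\[
\sum_{m}X^{\mathrm{ord}(p_{m})-d_{i}}\bigl(t'_{ijlj_{m}}(\cdots(t'_{ijl1_{m}}f_{i})\cdots)\bigr).
\]
The nested left products must be collapsed. Applying left quasi-associativity of $R[[X;\sigma]]$ would not be justified: the hypothesis is only on $R$, and Example \ref{0013} shows the property need not lift. Instead we use it coefficientwise. The leading coefficient of the nested product is computed in $R$ as $\sigma^{d_{i}}$ applied to $t_{ijlj}(\cdots(t_{ijl1}q_{i})\cdots)$, and left quasi-associativity of $R$, applied repeatedly from the inside out, gives $\beta_{i,m}\in R$ with
\[
t_{ijlj}(\cdots(t_{ijl1}q_{i})\cdots)=\beta_{i,m}q_{i}.
\]
Therefore we redo the reduction step of Lemma \ref{3311}, replacing the term $s_{m}$ by
\[
s_{m}':=\sum_{i}X^{\mathrm{ord}(p_{m})-d_{i}}\bigl(\sigma^{d_{i}}(\beta_{i,m})f_{i}\bigr),
\]
where the sum over $j,l$ has already been merged into the $\beta$'s. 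By the computation in Lemma \ref{3311}, $s'_{m}$ has the same order and leading coefficient as $p_{m}$. Merging the sum over $j,l$ into a single coefficient uses the additivity of left multiplication in the quasi-associative form. Since $Rq_{i}$ is a left ideal by Proposition \ref{3113}, sums of the form $\sum\beta q_{i}$ are again of the form $\beta q_{i}$, so one $\beta_{i,m}$ per $i$ suffices.

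\textbf{Rewriting the infinite sum.} Next we rewrite $X^{k}(cf_{i})$. By Remark \ref{316} and the right-hand coefficient form, $X^{k}c=\sigma^{k}(c)X^{k}$ as a single monomial, and $X^{k}\in N_{m}\cap N_{r}$. Hence
\[
X^{k}(cf_{i})=(X^{k}c)f_{i}=\bigl(\sigma^{k}(c)X^{k}\bigr)f_{i}.
\]
Summing over $m$, and noting that the orders $\mathrm{ord}(p_{m})$ strictly increase so the sum is a well-defined power series $h_{i}$, we obtain
\[
p=\sum_{i=1}^{n}h_{i}f_{i}.
\]
The interchange of the infinite sum with multiplication by $f_{i}$ is justified by distributivity together with coefficientwise finiteness: each coefficient of the product involves only finitely many terms.

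\textbf{Case $u<N$.} This case is identical, with the $g_{u,z}$ in place of the $f_{i}$. Here the remainders $p_{m}$ for $m\geq 1$ may have order $\geq N$, and they are then handled by the first case. This is harmless, since all generators lie in the finite set $G$.

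The main obstacle is the identity $X^{k}(cf)=(X^{k}c)f$ needed to pull scalars out. If $X^{k}$ is not in the left nucleus, we use instead that $(cX^{k})f$ and $c(X^{k}f)$ coincide because $X^{k}$ lies in the middle nucleus. We then write each term as $(\sigma^{k}(\beta)X^{k})f_{i}$ with suitable $\beta$, which surjectivity of $\sigma$ allows. This makes the summed coefficient series $h_{i}=\sum_{m}\sigma^{\cdot}(\beta_{i,m})X^{\mathrm{ord}(p_{m})-d_{i}}$ act on $f_{i}$ from the left.
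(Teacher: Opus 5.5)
The step you single out as the main obstacle is a genuine gap, and your fallback does not close it.

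The identity $X^{k}(cf_i)=(X^{k}c)f_i$ says that the associator $(X^{k},c,f_i)$ vanishes. That requires $X^{k}\in N_l(R[[X;\sigma]])$, whereas Remark~\ref{316} only gives $N_m\cap N_r$. Concretely, for $f_i=\sum_j a_jX^j$ one has $X^{k}(cf_i)=\sum_j\sigma^{k}(ca_j)X^{k+j}$, whereas $(X^{k}c)f_i=\sum_j\sigma^{k}(c)\sigma^{k}(a_j)X^{k+j}$.

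Switching to terms $(cX^{k})f_i$ does make the collection $\sum_m(c_mX^{k_m})f_i=h_if_i$ legitimate. That is just distributivity plus coefficientwise finiteness, and needs no nucleus property. But it changes the leading coefficient. Since $\mathrm{lc}(f_i)=\sigma^{d_i}(q_i)$, the $X^{u}$-coefficient of $\sum_i(c_iX^{u-d_i})f_i$ is $\sum_i c_i\,\sigma^{u}(q_i)$. So the reduction actually needs $\mathrm{lc}(p_m)=\sigma^{u}\bigl(\sum_i\beta_iq_i\bigr)\in\sum_iR\,\sigma^{u}(q_i)$ for $u=\mathrm{ord}(p_m)$.

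Surjectivity of $\sigma$ says nothing about this inclusion; it is a multiplicativity requirement. It can fail for additive bijections fixing $1$. On $R=\mathbb{Q}^3$ (associative, hence left quasi-associative, and Noetherian), let $\sigma(e_1)=e_1+e_3$, $\sigma(e_2)=e_2-e_3$, $\sigma(e_3)=e_3$. For $q=e_1+e_2$ one gets $\sigma(e_1q)=e_1+e_3\notin R\sigma(q)=R(e_1+e_2)$.

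Appealing to Lemma~\ref{3311} does not rescue this. Its leading-coefficient computation (moving $t'=\sigma^{d_i}(t)$ past $X^{d_i}$) and its claim that $Q$ is a left ideal rest on the same identity $\sigma^{d}(t)\sigma^{d}(q)=\sigma^{d}(tq)$. The paper's own proof of Theorem~\ref{3110} also makes the factorization $\sum_m X^{e_m}\gamma'_{i,m}f_i=\bigl(\sum_m X^{e_m}\gamma'_{i,m}\bigr)f_i$ without justification. So the gap is shared with the paper.

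If $\sigma$ is in addition multiplicative, your fallback goes through verbatim with $c_{i,m}=\sigma^{u_m}(\beta_{i,m})$, since then $\sum_i\sigma^{u_m}(\beta_{i,m})\sigma^{u_m}(q_i)=\mathrm{lc}(p_m)$.

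Apart from this gap, your route improves on the paper's in two ways.
\begin{itemize}
\item You use left quasi-associativity only inside $R$, via Proposition~\ref{3113} ($\langle q_1,\ldots,q_n\rangle_l=\sum_iRq_i$), so only depth-one multiples of the $f_i$ occur. The paper instead collapses $t'_{\ell}(\cdots(t'_{1}f_i)\cdots)$ to $\gamma'f_i$, which the definition does not license: the witness $s$ in $t'_2(t'_1a_j)=s\,a_j$ depends on the coefficient index $j$.
\item You use the generators of $Q_{\mathrm{ord}(p_m)}$ at each step and collect everything into an explicit finite set $G$. The paper reuses the $g_{u,z}$ for remainders of higher order.
\end{itemize}
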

	\begin{proof}
		 Let $P$ be a left ideal of $R[[X;\sigma]]$. Let $p\in P$ where $\mathrm{ord}(p)=u$ and $\mathrm{lc}(p)=v$. Define $A:=\{f_{1},\ldots,f_{n}\}$ obtained from the choice process in the part of the proof of Lemma \ref{3311}, $d_{i}:=\mathrm{ord}(f_{i})$, and $N:=\max\{d_{i}\}$. Consider the following cases:
		
			If $u\geq N$: From Lemma \ref{3311}, we have
			\[
			p=\sum\limits_{i=1}^{n}\sum\limits_{j=1}^{n_{i}}\sum\limits_{l=1}^{k_{ij}}\Bigg(\sum\limits_{m=0}^{\infty}X^{\mathrm{ord}(p_{m})-d_{i}}(t_{ijlj_{m}}'(\cdots(t_{ijl2_{m}}'(t_{ijl1_{m}}'f_{i}))\cdots))\Bigg)
			\]
			for some $n,n_{i},k_{ij}\in\mathbb{N}$, $f_{i}\in A$, $t'_{ij1_{m}},t'_{ij2_{m}},\ldots,t'_{ijj_{m}}\in R$, and $p_{m}\in R[[X;\sigma]]$ for $m\geq 1$. Since $R$ is left quasi-associative,
			\[
			X^{\mathrm{ord}(p_{m})-d_{i}}(t_{ijlj_{m}}'(\cdots(t_{ijl2_{m}}'(t_{ijl1_{m}}'f_{i}))\cdots))=X^{\mathrm{ord}(p_{m})-d_{i}}\gamma_{i,m}'f_{i}
			\]
			for some $\gamma_{i,m}'\in R$. Particularly, if $R$ is associative, then $\gamma_{i,m}':=t_{ijlj_{m}}'\cdots t_{ijl2_{m}}'t_{ijl1_{m}}$.
			Now,
			\[
			p=\sum\limits_{i=1}^{n}\left(\sum\limits_{m=0}^{\infty}X^{\mathrm{ord}(p_{m})-d_{i}}\gamma_{i,m}'f_{i}\right)=\sum\limits_{i=1}^{n}\left(\sum\limits_{m=0}^{\infty}X^{\mathrm{ord}(p_{m})-d_{i}}\gamma_{i,m}'\right)f_{i}.
			\]
			In this case, $p$ can be written as a finite linear combination of elements $f_{1},\ldots,f_{n}$.\\
			
			If $u<d_{j}$ for some $j\in\mathbb{N}$: From Lemma \ref{3311}, we have
			\[
			p=\sum\limits_{z=1}^{e}\sum\limits_{j=1}^{e_{z}}\sum\limits_{l=1}^{k_{zj}}\Bigg(\sum\limits_{m=0}^{\infty}X^{\mathrm{ord}(p_{m})-u}(v_{u,zjlj_{m}}'(\cdots(v_{u,zjl2_{m}}'(v_{u,zjl1_{m}}'g_{u,z}))\cdots))\Bigg)
			\]
			for some $e,e_{z},k_{zj}\in\mathbb{N}$,  $g_{u,z}\in P$, $v_{u,zjl1_{m}}',v_{u,zjl2_{m}}',\ldots,v_{u,zjlj_{m}}'\in R$, and $p_{m}\in R[[X;\sigma]]$ for $m\geq 1$. Since $R$ is left quasi-associative, 
			\[
			X^{\mathrm{ord}(p_{m})-u}(v_{u,zjlj_{m}}'(\cdots(v_{u,zjl2_{m}}'(v_{u,zjl1_{m}}'g_{u,z}))\cdots))=X^{\mathrm{ord}(p_{m})-u}\gamma_{u,z_{m}}'g_{u,z}
			\]
			for some $\gamma_{u,z_{m}}'\in R$. If $R$ is associative, then $\gamma_{u,z_{m}}':=v_{u,zjlj_{m}}'\cdots v_{u,zjl2_{m}}'v_{u,zjl1_{m}}'$. Now, we have
			\[
			p=\sum\limits_{z=1}^{e}\left(\sum\limits_{m=0}^{\infty}X^{\mathrm{ord}(p_{m})-u}\gamma_{u,z_{m}}'g_{u,z}\right)=\sum\limits_{z=1}^{e}\left(\sum\limits_{m=0}^{\infty}X^{\mathrm{ord}(p_{m})-u}\gamma_{u,z_{m}}'\right)g_{u,z}.
			\]
			 In this case, $p$ can be written as a finite linear combination of elements $g_{u,1},\ldots,g_{u,e}$.\\
			
			By arbitrarines of $p$, $p$ can be written as a finite linear combination using elements $g_{u,1},\ldots,g_{u,e},f_{1},\ldots,f_{n}$. Hence, $P$ is a finitely generated left ideal of $R[[X;\sigma]]$ and so $R[[X;\sigma]]$ is left Noetherian.
	\end{proof}

    Theorem \ref{3110} recovers the Hilbert Basis Theorem for left Noetherianity of skew power series rings \cite{Goodearl} whenever $R$ is associative, left Noetherian, and $\sigma$ is an automorphism on $R$. Moreover, if $R$ is associative, Theorem \ref{3110} provides the version of the theorem for left Noetherianity of non-associative skew power series rings over an associative coefficient ring.

	We have an example satisfying Theorem \ref{3110}.
    \vspace{0.1cm}

    	\begin{ex}
		\textnormal{Let $u=\sum\limits_{i=0}^{7}d_{i}e_{i}\in\mathbb{O}$. Define $\sigma:\mathbb{O}\longrightarrow\mathbb{O}$ by }
		\[
		\sigma(u):=\overline{u}
		\]
		\textnormal{where $\overline{u}$ is called the conjugate of $u$ where $\overline{u}=d_{0}e_{0}+\left(-\left( \sum\limits_{i=1}^{7}d_{i}e_{i} \right)\right)$. Obviously, $\sigma(1_{\mathbb{R}})=1_{\mathbb{R}}$ and $\sigma$ is an additive bijection map. Also, $\mathbb{O}$ is left Noetherian since its only left ideals are $\{0\}$ and $\mathbb{O}$ itself which satisfies ACC on left ideals. From Example \ref{3114}, $\mathbb{O}$ is left quasi-associative.
		By Theorem \ref{3110}, the non-associative skew power series ring $\mathbb{O}[[X;\sigma]]$ is left Noetherian.}
	\end{ex}
\vspace{0.1cm}

	The following result describes when a non-associative skew power series ring is Noetherian.
\vspace{0.1cm}
	\begin{cor}
	Let $R$ be a unital non-associative ring and $\sigma: R \longrightarrow R$ be an additive map such that $\sigma(1) = 1$. If $R$ is Noetherian that is quasi-associative, and $\sigma$ is a bijective map, then $R[[X; \sigma]]$ is Noetherian.
	\end{cor}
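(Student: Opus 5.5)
The plan is to obtain the corollary directly by combining Theorem \ref{314} and Theorem \ref{3110}. By Definition \ref{238}, $R[[X;\sigma]]$ is Noetherian exactly when it is both left and right Noetherian, so it suffices to check the hypotheses of each theorem separately.

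For right Noetherianity, I would unpack the hypotheses as follows. Since $R$ is Noetherian, it is in particular right Noetherian. Since $R$ is quasi-associative (Definition \ref{311}), it is in particular right quasi-associative. Since $\sigma$ is bijective, it is in particular surjective. Together with $\sigma$ being additive with $\sigma(1)=1$, these are exactly the hypotheses of Theorem \ref{314}, which then gives that $R[[X;\sigma]]$ is right Noetherian.

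For left Noetherianity, the same hypotheses give that $R$ is left Noetherian, left quasi-associative, and that $\sigma$ is an additive bijection with $\sigma(1)=1$. These are the hypotheses of Theorem \ref{3110}, so $R[[X;\sigma]]$ is left Noetherian.

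Combining the two conclusions, $R[[X;\sigma]]$ is Noetherian. There is no substantive obstacle: the only point to note is that bijectivity is needed solely for the left-hand statement, while the right-hand statement requires only surjectivity. All the real work has already been carried out in Lemmas \ref{34} and \ref{3311} and in the two theorems.
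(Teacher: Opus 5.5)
Your proposal is correct and is essentially the paper's own proof: right Noetherianity comes from Theorem \ref{314}, using that a bijection is in particular surjective, and left Noetherianity comes from Theorem \ref{3110}. The two are then combined via Definition \ref{238}, and your remark that bijectivity is needed only on the left side matches the paper's one-line justification.
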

	\begin{proof}
		The proof follows from Theorem \ref{314} and Theorem \ref {3110} using the fact that every bijective map is surjective.
	\end{proof}

    \begin{ex}
		\textnormal{Recall that $\mathbb{O}$ is Noetherian by Example \ref{211} and both left and right quasi-associative by Example \ref{3114}. Let $\sigma : \mathbb{O} \longrightarrow \mathbb{O}$ be a conjugate map which is a bijective map. Then $\mathbb{O}[[X; \sigma]]$ is Noetherian.}
	\end{ex}
\vspace{0.1cm}

The Hilbert Basis Theorem in $R[[X;\sigma]]$ fails in general if $R$ is not quasi-associative, despite $R$ being Noetherian and $\sigma$ be an additive bijective map on $R$.

\begin{ex}\label{counter}
    \textnormal{Let $K$ be a field. Define $A:=K[u]$ and $M:=K[t]$ be polynomial rings over $K$. For any $a=\sum\limits_{i=0}^{w}a_{i}u^{i}\in A$, define a $K$-linear map $\rho_{a}:M\longrightarrow M$ by $\rho_{a}(m)=a_{0}m+a_{1}tm$ for every $m\in M$. Define $R$ to be the direct product $A\times M$ with usual componentwise addition and multiplication is defined as: for any $b=\sum\limits_{i=0}^{v}b_{i}u^{i}\in A$,} 
    \[
(a,m)(b,n)
=(ab,a_{0}n+\rho_{b}(m))=
\bigl(ab,\;a_0n+b_0m+b_1tm\bigr).
\]
\textnormal{$R$ is an additive abelian group and the multiplication is distributive over addition so $R$ is a non-associative ring. Particularly, the multiplication here is not associative since $((0,1)(u,0))(u,0)\neq (0,1)((u,0)(u,0))$. The identity element here is $(1,0)$.} 

\textnormal{$R$ is not quasi-associative. It suffices to show that $R$ is not right quasi-associative. Consider the elements 
$r=(0,1)$,
$b=c=(u,0)\in R$. Note that}
\[
(rb)c=(0,t)(u,0)=(0,\rho_{u}(t))=(0,t^{2}).
\]
\textnormal{Suppose $R$ is right quasi-associative. Let \(s=(a,n)\in R\) such that $(rb)c=rs$.}
\textnormal{Then}
\[
rs
=
(0,1)(a,n)
=
\bigl(0,\rho_{a}(1)\bigr)
=
(0,a_0+a_1t).
\]

\textnormal{Consequently,
$rR
=
\{(0,c_{1}+c_{2}t):c_{1},c_{2}\in K\}$ where $(0,t^2)\notin rR$. Therefore, there is no \(s\in R\) such that $(rb)c=rs$.}

\textnormal{Define $M_{0}:=0\times M
=
\{(0,m):m\in M\}$ which is an ideal of $R$. The quotient $R/M_{0}\cong A$. Since $A$ is Noetherian, so is $R/M_{0}$. We show that $M_{0}$ is also Noetherian. Let $J\subseteq M_{0}$ be an ideal of $R$. We can identify $M_{0}$ with $M$ via $ (0, m) \longleftrightarrow m$.}
Define \[
\tilde{J} = \{m \in M : (0,m) \in J\}.
\]
\textnormal{We claim that $\tilde{J}$ is an ideal of $M$. To see this, $\tilde{J}$ is an additive subgroup of $M$ since for every $a,b\in\tilde{J}$, $(0,a),(0,b)\in J$. So, $(0,a)-(0,b)=(0,a-b)\in J$ so $a-b\in\tilde{J}$. Next, let $(u,0)\in R$ and $(0,m)\in J$. Since $J$ is a right ideal of $R$,}
\begin{align*}
    (0,m)(u,0)=(0,0+\rho_{u}(m))=(0,0+0m+1tm)=(0,tm)\in J.
\end{align*}
\textnormal{This means $tm\in\tilde{J}$. Inductively, $t^{i}m\in\tilde{J}$ for all $i\geq 0$. Let $f=\sum\limits_{i=0}^{w}c_{i}t^{i}\in M$. Note that $m,tm,\ldots,t^{w}m\in\tilde{J}$. Also, for any scalar $c\in K$, $(0,m)(c,0)=(0,0+\rho_{c}(m))=(0,0+cm+0tm)=(0,cm)\in J$. So, $cm\in\tilde{J}$. Since we already established that $\tilde{J}$ is an additive subgroup of $M$,} 
\[
fm=\left(\sum\limits_{i=0}^{w}c_{i}t^{i}\right)m=\sum\limits_{i=0}^{w}c_{i}t^{i}m\in\tilde{J}.
\]
\textnormal{Since $M$ is commutative, $fm=mf\in\tilde{J}$. By arbitrariness of $m\in\tilde{J}$ and $f\in M$, $\tilde{J}$ is an ideal of $M$.}\\

\textnormal{Since $M$ is a principal ideal domain and commutative, $\tilde{J}$ is a principal ideal. Now, let
\[
J_1 \subset J_2 \subset J_3 \subset \cdots
\]
be an ascending chain of ideals of $R$ contained in $M_0$. From the identification above, the
corresponding sets
\[
\tilde{J}_1 \subseteq \tilde{J}_2 \subseteq \tilde{J}_3 \subseteq \cdots
\]
also form an ascending chain of ideals of $M$. Since $M$ is Noetherian, there exists $m\in\mathbb{N}$ such that $\tilde{J}_n = \tilde{J}_m$ for all $n \ge m$. Therefore $J_n = J_m $ for all $n \ge m$. Hence, the ideals of $R$ contained in $M_0$ satisfy ACC. So, $M_{0}$ is Noetherian.}

\textnormal{We now show that $R$ is Noetherian. Suppose
\[
J_1\subset J_2\subset J_3\subset\cdots
\]
is an ascending chain of ideals of \(R\). Consider the respective ascending chain of ideals
\[
J_1\cap M_0
\subset
J_2\cap M_0
\subset
J_3\cap M_0
\subset\cdots
\]
in $M_{0}$ and
\[
(J_1+M_0)/M_0
\subset
(J_2+M_0)/M_0
\subset
(J_3+M_0)/M_0
\subset\cdots
\]
in $R/M_{0}$. The first chain stabilizes because \(M_0\) is Noetherian. The second chain also stabilizes since $A$ is Noetherian. Hence, there is \(m\in\mathbb{N}\) such that, for all \(n\geq m\), $J_n\cap M_0=J_m\cap M_0$
and $(J_n+M_0)/M_0=(J_m+M_0)/M_0$. We claim that $J_n=J_m$ for all $n\geq N$. Let \(x\in J_n\). Since \(R/M_0\) is Noetherian, there exists \(y\in J_m\) such that $x+M_0=y+M_0$. Hence, $x-y\in M_0$. Since \(x\in J_n\) and \(y\in J_m\subseteq J_n\), we also have $x-y\in J_n$. Therefore, $x-y\in J_n\cap M_0
=
J_m\cap M_0
\subseteq J_m$. Since \(y\in J_m\), so $x=(x-y)+y\in J_m$. Thus, $J_n\subseteq J_m$. The reverse inclusion follows from the original ascending chain,
so $J_n=J_m$. Consequently, $
R$ is Noetherian.}

\textnormal{Consider the non-associative skew power series ring $S:=R[[X;\sigma]]$ where $\sigma$ is the identity map on $R$. We claim here that $S$ is not Noetherian. It suffices to show that $S$ is not right Noetherian. For every $k\geq 1$, define $p_k=\sum\limits_{j=0}^{\infty}
(0,t^{kj})X^j$. For every $n\geq 1$, define $J_{n}=\langle p_1,\ldots,p_n\rangle_{r}$. Observe that 
\[
J_{1}\subset J_{2}\subset J_3\subset\cdots.
\]
is an ascending chain of right ideals in $S$. We claim that for every \(n\geq2\), $p_n\notin J_{n-1}$. Suppose on the contrary that $p_n\in J_{n-1}$. Then $p_{n}$ is a finite sum consisting of terms which are products of the form \[(\cdots((p_kq_1)q_2)\cdots) q_d,\qquad (3.1)\]
where $1\leq k\leq n-1$ and $q_1,\ldots,q_d\in S$ for some $d\in\mathbb{N}$. Since $R$ is not right quasi-associative, by the contrapositive of Proposition \ref{inherit}, $S$ is not right quasi-associative. This implies that there exists $D\in\mathbb{N}$ such that the finite sum expressing $p_{n}$ uses at most $D\geq 2$ right multiplications of $q_{i}$ where $d\leq D$. Observe that the coefficient of $X^{j}$ in $p_{k}$ is $(0,t^{kj})$ so the degree of $t^{kj}$, $\deg(t^{kj})=kj$. Let $N\in\mathbb{N}$. If $0\leq j\leq N$, then $kj\leq (n-1)N$. This implies that $\max\limits_{0\leq j\leq N}kj
\leq
(n-1)N$. Let $(0,h)\in R$ be the coefficient of $X^{j}$ in (3.1). Then $\deg(h)\leq (n-1)N+D$. Since adding each term of the form given by (3.1) does not increase the degree of polynomials in the second component of the coefficient, this means that in the finite sum expressing $p_{n}$, the $X^{N}$-coefficient of $p_{n}$, which is $(0,t^{nN})$ also satisfies $\deg(t^{nN})=nN\leq (n-1)N+D$. This implies that $N\leq D$. Now, if we choose $N>D$, then this case implies that $\deg(t^{nN})=nN> (n-1)N+D$. This is a contradiction to the established upper bound $(n-1)N+D$. Hence,
$p_n\notin J_{n-1}$ and so $J_{n-1}\subset J_{n}$ for every $n\geq 2$, that is, this ascending chain would not stabilze. So, $S$ does not satisfy the ACC on right ideals. Hence, $S$ is not right Noetherian.}
\end{ex}

Now, let us explore the Hilbert Basis Theorem for  $R((X;\sigma))$. The following proposition exhibits how left and right ideals of $R((X;\sigma))$ relate to $R[[X;\sigma]]$.
\vspace{0.1cm}

\begin{prop}\label{3117}
Let $R((X;\sigma))$ be a non-associative skew Laurent series ring. If $P$ is a right ideal of $R((X;\sigma))$, then $P=(P\cap R[[X;\sigma]])R((X;\sigma))$. If $P$ is a left ideal of $R((X;\sigma))$, then $P=R((X;\sigma))(P\cap R[[X;\sigma]])$.

\end{prop}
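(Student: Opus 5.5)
We prove both equalities by double inclusion. We read $(P\cap R[[X;\sigma]])R((X;\sigma))$ as the set of finite sums of products $fg$ with $f\in P\cap R[[X;\sigma]]$ and $g\in R((X;\sigma))$. The left version is read analogously. The proof only needs the closure properties of one-sided ideals and Remark \ref{316}, applied in $R((X;\sigma))$ with $k\in\mathbb{Z}$.

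For the right ideal case, the inclusion $(P\cap R[[X;\sigma]])R((X;\sigma))\subseteq P$ is immediate. Each $f\in P\cap R[[X;\sigma]]$ lies in $P$, so $fg\in P$ for every $g\in R((X;\sigma))$, and $P$ is closed under finite sums.

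For the reverse inclusion, take a nonzero $p\in P$ with $u:=\mathrm{ord}(p)$; the element $0$ is trivially covered. If $u\geq 0$, then $p\in P\cap R[[X;\sigma]]$ and $p=p\cdot 1$. If $u<0$, set $f:=pX^{-u}$, which lies in $P$ because $P$ is a right ideal. Since $(rX^{i})X^{-u}=r\sigma^{i}(1)X^{i-u}=rX^{i-u}$, multiplying by $X^{-u}$ just shifts all exponents by $-u$. Hence $\mathrm{ord}(f)=0$, so $f\in P\cap R[[X;\sigma]]$. We then compute
\[
fX^{u}=(pX^{-u})X^{u}=p(X^{-u}X^{u})=p\cdot 1=p .
\]
The middle equality holds because $X^{u}\in N_{r}(R((X;\sigma)))$ by Remark \ref{316}, so the associator $(p,X^{-u},X^{u})$ vanishes. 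The next step uses $X^{-u}X^{u}=\sigma^{-u}(1)X^{0}=1$. This gives $p\in(P\cap R[[X;\sigma]])R((X;\sigma))$.

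For the left ideal case, the inclusion $R((X;\sigma))(P\cap R[[X;\sigma]])\subseteq P$ is again immediate. For nonzero $p\in P$ with $u=\mathrm{ord}(p)<0$, set $g:=X^{-u}p\in P$. Termwise we have $X^{-u}(aX^{i})=\sigma^{-u}(a)X^{i-u}$. Since $\sigma$ is a bijection, $\sigma^{-u}(\mathrm{lc}(p))\neq 0$, so $\mathrm{ord}(g)=0$ and $g\in P\cap R[[X;\sigma]]$. It remains to check that $X^{u}g=X^{u}(X^{-u}p)$ equals $(X^{u}X^{-u})p=p$. This reassociation is the main obstacle, because $X^{u}$ need not lie in the left nucleus. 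We avoid needing that: the associator $(X^{u},X^{-u},p)$ has $X^{-u}$ in the middle slot, and $X^{-u}\in N_{m}(R((X;\sigma)))$ by Remark \ref{316}, so it vanishes. Since $X^{u}X^{-u}=\sigma^{u}(1)X^{0}=1$, we get $p=X^{u}g\in R((X;\sigma))(P\cap R[[X;\sigma]])$, which completes the proof.
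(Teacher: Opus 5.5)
Your proof is correct and follows the same route as the paper: multiply $p$ by $X^{-\mathrm{ord}(p)}$ (on the right, resp.\ left) to land in $P\cap R[[X;\sigma]]$, then multiply back by $X^{\mathrm{ord}(p)}$. Your appeal to Remark \ref{316} justifies the reassociation, using the right nucleus in the right-ideal case and the middle nucleus in the left-ideal case, where $X^{u}$ need not lie in the left nucleus; the paper leaves this step implicit and simply calls the left case analogous.
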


\begin{proof}
	Let $R$ be a non-associative ring and let $\sigma: R\longrightarrow R$ be an additive bijection such that $\sigma(1_{R})=1_{R}$. Let $P$ be a right ideal of $R((X;\sigma))$. Let $z\in (P\cap R[[X;\sigma]])R((X;\sigma))$. Then $z=ab$ for some $a\in P\cap R[[X;\sigma]]$, $b\in R((X;\sigma))$. Observe that $(P\cap R[[X;\sigma]])R((X;\sigma))\subseteq PR((X;\sigma))\subseteq P$. Thus, $z\in P$ and so $	(P\cap R[[X;\sigma]])R((X;\sigma))\subseteq P$. Let $z\in P$. Then \[z=\sum\limits_{i=m}^{\infty}z_{i}X^{i}=z_{m}X^{m}+H.T.\] for some $m\in\mathbb{Z}$. Now, consider the element $zX^{-m}\in P$. Note that
	\[
	zX^{-m}=z_{m}+H.T
	\]
	where $\mathrm{ord}(zX^{-m})=0$ and the powers of $X$ in H.T. of $zX^{-m}$ are positive integers since for every $X^{k}$ in H.T. of $z$, we have $X^{k}X^{-m}=X^{k-m}$ where $k-m>0$ due to $\mathrm{ord}(z)=m$. So, $zX^{-m}\in R[[X;\sigma]]$. This implies that $zX^{-m}\in P\cap R[[X;\sigma]]$. Now, $z=(zX^{-m})X^{m}$ where $X^{m}\in R((X;\sigma))$. So, $z\in (P\cap R[[X;\sigma]])R((X;\sigma))$ and so $P\subseteq  (P\cap R[[X;\sigma]])R((X;\sigma))$. Hence, the equality holds. The analogous argument follows for left ideals.
\end{proof}

Now, we have the following theorem that shows the interdependency between the right (resp. left) Noetherianity of $R((X;\sigma))$ and $R[[X;\sigma]]$, provided that a non-associative ring $R$ is right (resp. left) quasi-associative.

\begin{thm}\label{3118}
Let $R$ be a unital non-associative ring that is right (resp. left) quasi-associative and
$\sigma:R\longrightarrow R$ be an additive bijection map such that $\sigma(1)=1$. A non-associative skew Laurent series ring $R((X;\sigma))$ is right (resp. left) Noetherian if and only if the non-associative skew power series ring $R[[X;\sigma]]$ is right (resp. left) Noetherian.
\end{thm}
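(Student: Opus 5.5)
We prove the two directions separately, treating right Noetherianity in detail; the left case is symmetric, using the second half of Proposition \ref{3117}.

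\emph{($\Leftarrow$) Assume $R[[X;\sigma]]$ is right Noetherian.} We pass from ideals of $R((X;\sigma))$ to ideals of $R[[X;\sigma]]$ by contraction.

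Let $P_1\subseteq P_2\subseteq\cdots$ be an ascending chain of right ideals of $R((X;\sigma))$. Each contraction $P_k\cap R[[X;\sigma]]$ is a right ideal of $R[[X;\sigma]]$, since $R[[X;\sigma]]$ is a subring closed under the multiplication of $R((X;\sigma))$. The contractions form an ascending chain, so by hypothesis they stabilize at some index $n$. Proposition \ref{3117} gives $P_k=(P_k\cap R[[X;\sigma]])R((X;\sigma))$, and therefore $P_k=P_n$ for all $k\ge n$.

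This direction needs no quasi-associativity.

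\emph{($\Rightarrow$) Assume $R((X;\sigma))$ is right Noetherian.} Let $I_1\subseteq I_2\subseteq\cdots$ be a chain of right ideals of $R[[X;\sigma]]$. Since we cannot simply extend each $I_k$, we instead encode it by leading data indexed by order. For each $d\ge 0$ set
\[Q_d(I_k):=\{0\}\cup\{\mathrm{lc}(f): f\in I_k,\ \mathrm{ord}(f)=d\}.\]
By the argument in the proof of Lemma \ref{34}, which uses surjectivity of $\sigma$, each $Q_d(I_k)$ is a right ideal of $R$. Right multiplication by $X$ gives $Q_d(I_k)\subseteq Q_{d+1}(I_k)$, since $fX$ has leading coefficient $\mathrm{lc}(f)$.

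Next we extend each $I_k$ to a right ideal $\widehat{I_k}$ of $R((X;\sigma))$, namely the right ideal generated by $I_k$. Here right quasi-associativity should show that the leading coefficients of order $d$ in $\widehat{I_k}$ form $Q_\infty(I_k)=\bigcup_d Q_d(I_k)$. The reason is that iterated products $(\cdots((fa_1)a_2)\cdots)a_j$ with $a_i\in R((X;\sigma))$ have leading coefficient in $\mathrm{lc}(f)R$ (modulo cancellation), using Proposition \ref{3113} and Remark \ref{316}.

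Right Noetherianity of $R((X;\sigma))$ makes the chain $\widehat{I_k}$ stabilize, hence the chain $Q_\infty(I_k)$ stabilizes. The remaining task is to show that the finitely many relevant $Q_d(I_k)$ also stabilize, and then to run the successive-approximation argument of Lemma \ref{34} and Theorem \ref{314} to conclude $I_k=I_n$.

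\emph{Main obstacle.} The hard part is controlling the low-order levels $Q_d$ for $d$ below the stabilization point. Noetherianity of $R$ is not assumed, so ACC for each $Q_d$ separately is not available.

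The first thing I would try is to recover each $Q_d(I_k)$ from $R((X;\sigma))$ itself, by considering the right ideal
\[\widehat{I_k}\cap X^{-d}\cdot\bigl(\text{series of order}\ge 0\bigr)\]
together with truncation. If that fails, I would fall back to deriving that $R$ is right Noetherian from $R((X;\sigma))$ being right Noetherian. A chain of right ideals $J_k$ of $R$ gives right ideals $J_k((X;\sigma))$ of $R((X;\sigma))$; checking closure requires right quasi-associativity together with $\sigma$ bijective. Once $R$ is right Noetherian, Theorem \ref{314} applies directly and gives that $R[[X;\sigma]]$ is right Noetherian.
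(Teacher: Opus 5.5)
Your ($\Leftarrow$) direction is correct and is the paper's argument in ACC form. The paper shows that generators of $P\cap R[[X;\sigma]]$ generate $P$, while you show that stabilization of the contractions forces $P_k=P_n$. Both rest only on Proposition \ref{3117}, and you are right that quasi-associativity is not used there.

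The ($\Rightarrow$) direction does not close along your main plan, and the obstacle you flag is a real gap.
\begin{itemize}
\item Without ACC on right ideals of $R$, nothing forces the chains $k\mapsto Q_d(I_k)$ to stabilize for small $d$.
\item You do not even know that $Q_\infty(I_n)=\bigcup_d Q_d(I_n)$ is reached at a finite level $d$, let alone uniformly in $k$. So ``the finitely many relevant $Q_d(I_k)$'' is not yet defined.
\item The appeal to quasi-associativity ``modulo cancellation'' does not establish $\mathrm{lc}(\widehat{I_k})=Q_\infty(I_k)$. The inclusion $\subseteq$ really comes from $zX^N\in I_k$ for $N\gg 0$. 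You get this by shifting powers of $X$ leftward through the iterated products, using Remark \ref{316}.
\end{itemize}
You cannot avoid proving that $R$ is right Noetherian, because the conclusion already implies it (via $J\mapsto J[[X;\sigma]]$). Once you have it, Theorem \ref{314} finishes immediately, so the $Q_d$/$\widehat{I_k}$ machinery is superfluous.

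Your fallback is exactly the paper's route and should be the proof, with two corrections.
\begin{itemize}
\item Closure of $J((X;\sigma))$ needs neither quasi-associativity nor bijectivity of $\sigma$. For any right ideal $J$ of $R$ and $a_i\in J$, the coefficient of $X^k$ in $\bigl(\sum a_iX^i\bigr)\bigl(\sum s_jX^j\bigr)$ is $\sum_i a_i\sigma^i(s_{k-i})\in J$.
\item You should state that $J((X;\sigma))\cap R=J$. This is what transfers stabilization from $J_k((X;\sigma))$ back to $J_k$.
\end{itemize}
Quasi-associativity and surjectivity of $\sigma$ then enter only through Theorem \ref{314}. Your version is actually cleaner than the paper's. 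The paper takes the right ideal generated by $I_i$ and reads off the $X^0$-coefficient from leading terms only; what really justifies that step is that the generated ideal is contained in $I_i((X;\sigma))$.

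Finally, the left case is not literally symmetric for this device. Since $(sX^e)(aX^i)=s\sigma^e(a)X^{e+i}$, the set $J((X;\sigma))$ is a left ideal only when $\sigma^e(J)\subseteq J$ for all $e$. With $\sigma$ merely additive, the left version needs its own argument; the paper, too, only says ``analogously''.
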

\begin{proof}
Assume $R((X;\sigma))$ to be right Noetherian. Let 
\[
I_{1}\subset I_{2}\subset\cdots\qquad\text{(3.2)}
\]
be an ascending chain of right ideals in $R$. For each $i\in\mathbb{N}$, define $I_{i}'$ as the right ideal of $R((X;\sigma))$ generated by $I_{i}$. This creates an ascending chain of right ideals in $R((X;\sigma))$, i.e., $I_{1}'\subset I_{2}'\subset\cdots$. Here, $I_{i}\subseteq I_{i}'$. Since $R((X;\sigma))$ is right Noetherian, there exists $m\in\mathbb{N}$ such that $I_{m}'=I'_{m+1}$. From (3.1), $I_{m}\subseteq  I_{m+1}$. Now, let $c\in I_{m+1}$. Since $I_{m+1}\subseteq I_{m+1}'$, $c\in I_{m+1}'$. So, $c\in I_{m}'$. Thus, $c$ can be expressed as
\[
c=\sum_{i=1}^{n} \sum_{j=1}^{n_i} \sum_{l=1}^{k_{ij}} (\cdots((a_i r_{ijl1})r_{ijl2})\cdots)r_{ijlj}\qquad\text{(3.3)}
\]
for some $n,n_{i},k_{ij}\in\mathbb{N}$, $r_{ijl1}=s_{ijl1}X^{d_{1}}+H.T., r_{ijl2}=s_{ijl2}X^{d^{2}}+H.T.,\cdots, r_{ijlj}=s_{ijlj}X^{d_{j}}+H.T.\in R((X;\sigma))$, $a_{i}\in I_{m}$. Since $c$ is a constant, it has order $0$ so the right-hand side of (3.3) is also of order $0$. Then
\begin{align*}
	c&=\sum_{i=1}^{n} \sum_{j=1}^{n_i} \sum_{l=1}^{k_{ij}} (\cdots((a_i r_{ijl1})r_{ijl2})\cdots)r_{ijlj}\\
	&=\sum_{i=1}^{n} \sum_{j=1}^{n_i} \sum_{l=1}^{k_{ij}}((\cdots((a_{i}(s_{ijl1}X^{d_{1}}))s_{ijl2}X^{d_{2}})\cdots)s_{ijlj}X^{d_{j}})+H.T.\\
	&=\sum_{i=1}^{n} \sum_{j=1}^{n_i} \sum_{l=1}^{k_{ij}}((\cdots((a_{i}s_{ijl1})\sigma^{d_{1}}(s_{ijl2}))\cdots)\sigma^{d_{j-1}}(s_{ijlj})X^{d_{1}+d_{2}+\cdots+d_{j}})+H.T.\\
	&=\sum_{i=1}^{n} \sum_{j=1}^{n_i} \sum_{l=1}^{k_{ij}}((\cdots((a_{i}s_{ijl1})\sigma^{d_{1}}(s_{ijl2}))\cdots)\sigma^{d_{j-1}}(s_{ijlj}).
\end{align*}
Since $a_{i}\in I_{m}$ and $I_{m}$ is closed under addition and satisfies right absorption law, $c\in I_{m}$. Therefore, $I_{m+1}\subseteq I_{m}$ and so $I_{m}=I_{m+1}$. This implies that $R$ is right Noetherian. Now, $R$ is right quasi-associative by assumption and $\sigma:R\longrightarrow R$ is obviously a surjection map by definition of $R((X;\sigma))$, so by Hilbert Basis Theorem for right Noetherianity of $R[[X;\sigma]]$, $R[[X;\sigma]]$ is right Noetherian.

			Conversely, suppose a non-associative skew power series ring $R[[X;\sigma]]$ is right Noetherian. Then every right ideal of $R[[X;\sigma]]$ is finitely generated. Let $P$ be a right ideal of $R((X;\sigma))$ and $h\in R((X;\sigma))$. Note that $P\cap R[[X;\sigma]]$ is a right ideal of $R[[X;\sigma]]$. By assumption on $R[[X;\sigma]]$, $P\cap R[[X;\sigma]]$ is finitely generated as a right ideal of $R[[X;\sigma]]$, say by $f_{1},\ldots, f_{n}\in P\cap R[[X;\sigma]]$. Let $a\in P\cap R[[X;\sigma]]$. Then
\begin{align*}
ah&=\left(\sum\limits_{i=1}^{n}\sum\limits_{j=1}^{n_{i}}\sum\limits_{l=1}^{k_{ij}}(\cdots((f_{i}w_{ijl1})w_{ijl2})\cdots)w_{ijlj}\right)h\\
&=\sum\limits_{i=1}^{n}\sum\limits_{j=1}^{n_{i}}\sum\limits_{l=1}^{k_{ij}}((\cdots((f_{i}w_{ijl1})w_{ijl2})\cdots)w_{ijlj})h\qquad \qquad\text{(3.3)}
\end{align*}
for some $w_{ijl1},w_{ijl2},\ldots,w_{ijlj}\in R[[X;\sigma]]$. So, $ah\in(P\cap R[[X;\sigma]])R((X;\sigma))$. By Proposition \ref{3117}, $P=(P\cap R[[X;\sigma]])R((X;\sigma))$ so $ah\in P$. Since $R[[X;\sigma]]\subseteq R((X;\sigma))$, $w_{ijl1},w_{ijl2},\ldots,w_{ijlj}\in R((X;\sigma))$. By arbitrariness of $ah\in P$ and by (3.3), $P$ is a finitely generated right ideal of $R((X;\sigma))$. Hence, $R((X;\sigma))$ is right Noetherian.

The left version follows analogously.
\end{proof}

 We now present the Hilbert Basis Theorem for right Noetherianity of non-associative skew Laurent series ring $R((X;\sigma))$.

\begin{cor} \label{3119}
	Let $R$ be a unital non-associative ring and $\sigma:R\longrightarrow R$ be an additive bijection map such that $\sigma(1)=1$. If $R$ is a right Noetherian ring that is right quasi-associative, then $R((X; \sigma))$ is right Noetherian.
\end{cor}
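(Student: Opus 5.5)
The plan is to derive Corollary \ref{3119} directly from Theorem \ref{314} and Theorem \ref{3118}. No new ideal-theoretic argument is needed. The hypotheses here are that $R$ is unital, right Noetherian and right quasi-associative, and that $\sigma$ is an additive bijection with $\sigma(1)=1$. These are exactly what is required for $R((X;\sigma))$ to be defined (Definition \ref{215}), and they also feed both earlier results.

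\textbf{Step 1.} Since $\sigma$ is a bijection, it is in particular surjective. Together with $R$ being right Noetherian and right quasi-associative, all hypotheses of Theorem \ref{314} hold. That theorem then gives that $R[[X;\sigma]]$ is right Noetherian.

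\textbf{Step 2.} Next I invoke Theorem \ref{3118}. Its hypotheses are that $R$ is unital and right quasi-associative and that $\sigma$ is an additive bijection fixing $1$, and all of these are part of our assumptions. The direction ``$R[[X;\sigma]]$ right Noetherian $\Rightarrow$ $R((X;\sigma))$ right Noetherian'' then yields the conclusion.

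That converse direction rests on Proposition \ref{3117}: every right ideal $P$ of $R((X;\sigma))$ satisfies $P=(P\cap R[[X;\sigma]])R((X;\sigma))$. One also needs that $P\cap R[[X;\sigma]]$ is a right ideal of $R[[X;\sigma]]$, so that a finite generating set of it generates $P$.

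\textbf{Main obstacle.} There is no real computational obstacle; the only point to check is that each hypothesis is supplied exactly. One subtlety is that Theorem \ref{3118} as stated requires right quasi-associativity, and we have it. Another is that the generators $f_1,\dots,f_n$ of $P\cap R[[X;\sigma]]$ generate $P$ as a right ideal of $R((X;\sigma))$. This holds because $z=(zX^{-m})X^{m}$ with $zX^{-m}$ in the right ideal they generate, and that right ideal absorbs right multiplication by $X^m$.
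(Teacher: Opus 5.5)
Your proposal is correct and is exactly the paper's proof: bijectivity of $\sigma$ gives surjectivity, so Theorem \ref{314} makes $R[[X;\sigma]]$ right Noetherian, and the ``if'' direction of Theorem \ref{3118} then gives that $R((X;\sigma))$ is right Noetherian. Your unpacking of that direction via Proposition \ref{3117} and $z=(zX^{-m})X^{m}$ is also sound (it holds because $X^{m}$ lies in the right nucleus by Remark \ref{316}), and it shows that this direction does not use quasi-associativity.
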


\begin{proof}
	By Theorem \ref{314} and Theorem \ref{3118}, the conclusion follows.
\end{proof}

\begin{ex}\label{last1}
    \textnormal{Consider the unital non-associative ring $R:=M_{2}(\mathbb{R})$ presented in Example \ref{3111111} which is right quasi-associative. We will show that $R$ is right Noetherian. Observe that $R$ can be viewed as a vector space over $\mathbb{R}$ with basis $E_{11}$, $E_{12}$, $E_{21}$, and $E_{22}$ so $R$ has dimension $4$, denoted by $\mathrm{dim}(R)=4$. This implies that for every subspace $S$ of $R$, $\mathrm{dim}(S)\leq 4$. Since every ideal $I$ of $R$ is a subspace, then $I$, viewed as a subspace of $R$, also has a maximum dimension of $4$. Let $I_{1}\subset I_{2}\subset\cdots$ be an ascending chain of right ideals of $R$. From linear algebra, $\mathrm{dim}(I_{1})<\mathrm{dim}(I_{2})<\cdots$. Since $\mathrm{dim}(S)\leq 4$ for every subspace $S$ of $R$, $\mathrm{dim}(I_{i})$ is bounded above by $4$ for all $i$. This means there exists $n\in\mathbb{N}$, such that for all $i\geq n$, $\mathrm{dim}(I_{i})=\mathrm{dim}(I_{n})$. Since $I_{n}\subset I_{i}$ and $\mathrm{dim}(I_{i})=\mathrm{dim}(I_{n})$, so $I_{i}=I_{n}$. Thus, $R$ satisfies ACC on right ideals.}
    
    \textnormal{Let $\sigma:R\longrightarrow R$ defined by $\sigma(A)=A^{T}$ for any $A\in R$ where $A^{T}$ is the transpose matrix of $A$. This map preserves the identity element $E_{11}$ since $\sigma(E_{11})=(E_{11})^{T}=E_{11}$ and additive since for any $A,B\in R$, $(A+B)^{T}=A^{T}+B^{T}$. $\sigma$ is also injective since the kernel of $\sigma$ is the zero matrix and surjective since for any $A\in R$, $(A)^{T}=A$. So, $\sigma$ is bijective.}
    
    \textnormal{Consider the non-associative skew Laurent series ring $R((X;\sigma))$. By Corollary \ref{3119}, $R((X;\sigma))$ is right Noetherian.}
\end{ex}

Analogously, we have the Hilbert Basis Theorem for left Noetherianity

\begin{cor}\label{3120left}
 Let $R$ be a unital non-associative ring and $\sigma:R\longrightarrow R$ be an additive bijection map such that $\sigma(1)=1$. If $R$ is a left Noetherian ring and $R$ is left quasi-associative, then $R((X;\sigma))$ is left Noetherian.	
\end{cor}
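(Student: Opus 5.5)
The plan mirrors the proof of Corollary \ref{3119}: combine the left-hand Hilbert Basis Theorem for power series with the left version of the Laurent/power series transfer result.

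\emph{Step 1: $R[[X;\sigma]]$ is left Noetherian.} The hypotheses of Corollary \ref{3120left} are that $R$ is unital, left Noetherian and left quasi-associative, and that $\sigma$ is an additive bijection with $\sigma(1)=1$. These are exactly the hypotheses of Theorem \ref{3110}, which requires $\sigma$ to be bijective. Applying Theorem \ref{3110} gives that $R[[X;\sigma]]$ is left Noetherian. The bijectivity of $\sigma$ is essential here, because the proof of Lemma \ref{3311} rewrites elements with right-hand coefficients via $a_iX^i = X^i\sigma^{-i}(a_i)$. This is also why the Laurent series ring, which already assumes a bijection, fits naturally.

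\emph{Step 2: pass to $R((X;\sigma))$.} Invoke the left version of Theorem \ref{3118}, which requires only that $R$ be left quasi-associative and $\sigma$ an additive bijection with $\sigma(1)=1$. It states that $R((X;\sigma))$ is left Noetherian if and only if $R[[X;\sigma]]$ is. The direction ``$R[[X;\sigma]]$ left Noetherian $\Rightarrow$ $R((X;\sigma))$ left Noetherian'' then finishes the argument.

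\emph{Expected obstacle.} The paper proves Theorem \ref{3118} in detail only for right ideals and treats the left case as ``analogous,'' so that is the step to verify. For a left ideal $P$ of $R((X;\sigma))$, I would use the left half of Proposition \ref{3117}, namely $P = R((X;\sigma))(P\cap R[[X;\sigma]])$. Concretely, for $z\in P$ of order $m$, the element $X^{-m}z$ lies in $P\cap R[[X;\sigma]]$, and $z = X^{m}(X^{-m}z)$ because $X^{m}$ and $X^{-m}$ lie in the middle and right nuclei by Remark \ref{316}.

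Next, take finitely many left generators $f_1,\dots,f_n$ of $P\cap R[[X;\sigma]]$ over $R[[X;\sigma]]$. Every element of $P$ is then a sum of terms of the form $h\bigl(w_{j}(\cdots(w_1 f_i)\cdots)\bigr)$ with $h\in R((X;\sigma))$ and $w_k\in R[[X;\sigma]]$. Each such term already lies in the left ideal of $R((X;\sigma))$ generated by the $f_i$, since that ideal is closed under iterated left multiplication. Hence $f_1,\dots,f_n$ generate $P$ as a left ideal, so $R((X;\sigma))$ is left Noetherian.
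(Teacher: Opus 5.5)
Your proposal is correct and follows the paper's proof exactly. The paper also derives the corollary by applying Theorem \ref{3110} to conclude that $R[[X;\sigma]]$ is left Noetherian, and then using the left version of Theorem \ref{3118} to pass to $R((X;\sigma))$. Your explicit verification of that left version is a faithful spelling-out of the ``analogous'' argument the paper leaves implicit: you write $z=X^{m}(X^{-m}z)$ using $X^{-m}\in N_{m}(R((X;\sigma)))$, and then take left generators of $P\cap R[[X;\sigma]]$.
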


\begin{proof}
	By Theorem \ref{3110} and Theorem \ref{3118}, the conclusion follows.
\end{proof}

\begin{ex}
\textnormal{Take the opposite ring $R^{op}$ of $R:=M_{2}(\mathbb{R})$ in Example \ref{3111111}. Extending the definition from the associative case \cite{goodearl1976ring}, the opposite ring $R^{op}$ of a non-associative ring $R$, is a non-associative ring where $R=R^{op}$ with the same addition operation as $R$ but the multiplication $\cdot$ of $R^{op}$ is defined as: for any $a,b\in R^{op}, a\cdot b:=ba$ where $ba$ is a product in $R$.}

\textnormal{Here, $R^{op}$ of $R$ is still a unital non-associative ring by definition, with identity element $E_{11}$. Since $R^{op}=R$ and the multiplication in $R^{op}$ is just being ``flipped" by definition, we can use the analogous argument from Example \ref{last1} to show that $R^{op}$ is left Noetherian and left quasi-associative.}

\textnormal{Let $\sigma:R^{op}\longrightarrow R^{op}$ defined by $\sigma(A):=B$ for any $A\in R^{op}$ where $B_{11}:=A_{11}$, $B_{12}:=A_{12}+A_{22}$, $B_{21}:=A_{21}$, and $B_{22}:=A_{22}$. This map preserves the identity element $E_{11}$ since $(\sigma(E_{11}))_{12}=(E_{11})_{12}+(E_{11})_{22}=0+0=0=(E_{11})_{12}$ and the rest of the entries are equal by definition of $\sigma$. Let $C,D\in R^{op}$. This map is also additive since for any $i,j$ other than $i=1,j=2$, we have $(\sigma(C+D))_{ij}=(C+D)_{ij}=C_{ij}+D_{ij}=(\sigma(C))_{ij}+(\sigma(D))_{ij}$ and for $i=1,j=2$, $(\sigma(C+D))_{12}=(C+D)_{12}+(C+D)_{22}=(C_{12}+D_{12})+(C_{22}+D_{22})=(C_{12}+C_{22})+(D_{12}+D_{22})=(\sigma(C))_{12}+(\sigma(D))_{12}$. Moreover, $\sigma$ is clearly injective and it is surjective since for any $B\in R^{op}$, take $A\in R^{op}$ with entries $A_{11}=B_{11}$, $A_{12}=B_{12}-B_{22}$, $A_{21}=B_{21}$, and $A_{22}=B_{22}$ so that $\sigma(A)=B$. So, $\sigma$ is bijective.}

\textnormal{Now, consider the non-associative skew Laurent series $R^{op}((X;\sigma))$. By Corollary \ref{3120left}, $R^{op}((X;\sigma))$ is left Noetherian.}
\end{ex}

Now, we can describe when a non-associative skew Laurent series ring $R((X;\sigma))$ is Noetherian.

\begin{cor}\label{3120}
	Let $R$ be a unital non-associative ring and $\sigma:R\longrightarrow R$ be an additive bijection map such that $\sigma(1)=1$. If $R$ is a Noetherian ring that is quasi-associative, then $R((X;\sigma))$ is Noetherian.
\end{cor}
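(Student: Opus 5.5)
The plan is to derive Corollary \ref{3120} directly from the two one-sided results already established for the Laurent series ring, in the same way the Noetherian corollary for $R[[X;\sigma]]$ was obtained from Theorems \ref{314} and \ref{3110}. First I will unpack the hypotheses. By Definition \ref{238}, ``$R$ is Noetherian'' means that $R$ is both left and right Noetherian. By Definition \ref{311}, ``$R$ is quasi-associative'' means that $R$ is both left and right quasi-associative. The map $\sigma$ is, by hypothesis, an additive bijection with $\sigma(1)=1$, which is exactly the standing assumption needed for $R((X;\sigma))$ to be defined (Definition \ref{215}).

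Next I will apply Corollary \ref{3119}. Its hypotheses are: $R$ unital, $\sigma$ an additive bijection fixing $1$, and $R$ right Noetherian and right quasi-associative. These are all satisfied, so $R((X;\sigma))$ is right Noetherian. Then I will apply Corollary \ref{3120left}, whose hypotheses are the same with ``right'' replaced by ``left''. Since $R$ is left Noetherian and left quasi-associative and $\sigma$ is bijective, this gives that $R((X;\sigma))$ is left Noetherian. In both cases no further condition on $\sigma$ is needed: bijectivity is exactly what those corollaries require, and it is stronger than the surjectivity used in Theorem \ref{314}.

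Finally, being both left and right Noetherian, $R((X;\sigma))$ is Noetherian by Definition \ref{238}. I do not expect any genuine obstacle here, since the statement is a conjunction of two results already proved. The only care needed is to check that the hypotheses of Corollaries \ref{3119} and \ref{3120left} are met verbatim, so that, through Theorem \ref{3118}, the reduction to the power series case via Theorems \ref{314} and \ref{3110} applies on each side.
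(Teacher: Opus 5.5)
Your proposal is correct and is exactly the paper's argument. The paper proves Corollary~\ref{3120} in one line by combining Corollary~\ref{3119} (right Noetherianity of $R((X;\sigma))$) with Corollary~\ref{3120left} (left Noetherianity), after reading ``Noetherian'' and ``quasi-associative'' as the conjunction of their left and right versions, just as you do.
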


\begin{proof}
     This follows from  Corollary \ref{3119} and Corollary \ref{3120left}.
\end{proof}

\begin{ex}
    \textnormal{Define the unital non-associative ring $R:=\mathbb{O}\times\mathbb{Z}$ and $\sigma:R\longrightarrow R$ to be 
 $\sigma((r,n)):=(\overline{r},n)$. By Noetherianity of direct product of non-associative rings \cite{back2023hilberts}, $R$ is Noetherian since $\mathbb{O}$ is Noetherian and $\mathbb{Z}$ is Noetherian. By Remark \ref{rem}, $R$ is quasi-associative. On the other hand, $\sigma$ preserves the identity element, that is, $\sigma(1_{\mathbb{R}},1_{\mathbb{Z}})=(\overline{1_{\mathbb{R}}},1_{\mathbb{Z}})=(1_{\mathbb{R}},1_{\mathbb{Z}})$ and $\sigma$ is an additive bijection map since $\sigma$ can be redefined as $\sigma((r,n)):=(\phi(r),\mathrm{id}(n))=(\overline{r},n)$ where $\phi:\mathbb{O}\longrightarrow\mathbb{O}$ is the conjugate map and $\mathrm{id}:\mathbb{Z}\longrightarrow\mathbb{Z}$ is the identity map. Componentwise, these maps are additive and bijective. Now, consider the non-associative skew Laurent series ring, $R((X;\sigma))$. By Corollary \ref{3120}, $R((X;\sigma))$ is Noetherian.} 
\end{ex}

Considering $R((X;\sigma))$ in Example \ref{counter}, the Hilbert Basis Theorem for $R((X;\sigma))$ given by Corollary \ref{3120} fails in general if $R$ is not quasi-associative, despite $R$ being Noetherian and $\sigma$ being an additive bijective map on $R$.

\noindent {\bf Acknowledgment.} The authors express their utmost gratitude and appreciation to the distinguished organization of the Department of Science and Technology-Accelerated Science and Technology Human Resource Development Program (DOST-ASTHRDP) in the Philippines for providing the necessary financial support needed in the conduct of this study.\\

\noindent {\bf AI use.} The authors have consulted AI tools for help with editing and literature search. \\

\noindent {\bf Disclosure Statement.} The authors declare that no competing interests exist.

\end{document}